\documentclass{amsart}
\usepackage{amsmath,amssymb}
\usepackage[dvips]{graphics}
\usepackage[all]{xy}
\newtheorem{Theorem}{Theorem}[section]
\newtheorem{Lemma}[Theorem]{Lemma}
\newtheorem{Proposition}[Theorem]{Proposition}

\theoremstyle{Definition}

\newtheorem{Example}[Theorem]{Example}

\theoremstyle{Remark}



\makeatletter
\@addtoreset{figure}{section}
\def\@thmcountersep{-}
\makeatother


\numberwithin{equation}{section}



\begin{document} 

\title{An intrinsic non-triviality of graphs}

\author{Ryo Nikkuni}
\address{Institute of Human and Social Sciences, Faculty of Teacher Education, Kanazawa University, Kakuma-machi, Kanazawa, Ishikawa, 920-1192, Japan}
\email{nick@ed.kanazawa-u.ac.jp}
\thanks{The author was partially supported by Grant-in-Aid for Young Scientists (B) (No. 18740030), Japan Society for the Promotion of Science.}

\subjclass{Primary 57M15; Secondary 57M25}

\date{}

\dedicatory{Dedicated to Professor Akio Kawauchi for his 60th birthday}

\keywords{Spatial graph, intrinsically linked, spatial handcuff graph}

\begin{abstract}
We say that a graph is intrinsically non-trivial if every spatial embedding of the graph contains a non-trivial spatial subgraph. We prove that an intrinsically non-trivial graph is intrinsically linked, namely every spatial embedding of the graph contains a non-splittable $2$-component link. We also show that there exists a graph such that every spatial embedding of the graph contains either a non-splittable $3$-component link or an irreducible spatial handcuff graph whose constituent $2$-component link is split. 
\end{abstract}

\maketitle

\section{Introduction} 

Throughout this paper we work in the piecewise linear category. Let $f$ be an embedding of a finite graph $G$ into the $3$-sphere ${\mathbb S}^{3}$. Then $f$ (or $f(G)$) is called a {\it spatial embedding} of $G$ or simply a {\it spatial graph}. We call a subgraph $\gamma$ of $G$ which is homeomorphic to the circle a {\it cycle}. If $G$ is homeomorphic to the disjoint union of cycles, then $f$ is none other than an {\it $n$-component link} (or {\it knot} if $n=1$). Two spatial embeddings $f$ and $g$ of $G$ are said to be {\it ambient isotopic} if there exists an orientation-preserving self homeomorphism $\Phi$ on ${\mathbb S}^{3}$ such that $\Phi\circ f=g$. A graph $G$ is said to be {\it planar} if there exists an embedding of $G$ into the $2$-sphere, and a spatial embedding $f$ of a planar graph $G$ is said to be {\it trivial} if it is ambient isotopic to an embedding of $G$ into a $2$-sphere in ${\mathbb S}^{3}$. A spatial embedding $f$ of $G$ is said to be {\it split} if there exists a $2$-sphere $S$ in ${\mathbb S}^{3}$ such that $S\cap f(G)=\emptyset$ and each connected component of ${\mathbb S}^{3}\setminus S$ has intersection with $f(G)$, and otherwise $f$ is said to be {\it non-splittable}. 

A graph $G$ is said to be {\it intrinsically linked} if every spatial embedding $f$ of $G$ contains a non-splittable $2$-component link. Conway-Gordon \cite{CG83} and Sachs \cite{Sachs84} showed that $K_{6}$ is intrinsically linked, where $K_{n}$ denotes the {\it complete graph} on $n$ vertices. Conway-Gordon also showed that $K_{7}$ is {\it intrinsically knotted}, namely every spatial embedding $f$ of $G$ contains a non-trivial knot. For a positive integer $n$, Flapan-Foisy-Naimi-Pommersheim \cite{FFNP} showed that there exists an {\it intrinsically $n$-linked} graph $G$, namely every spatial embedding $f$ of $G$ contains a non-splittable $n$-component link (see also \cite{FNP01}, \cite{BF04} for the case of $n=3$). Note that these results paid attention to only constituent knots and links of spatial graphs. Our purpose in this paper is to generalize the notion of intrinsically $n$-linkedness by paying attention to not only constituent knots and links but also spatial subgraphs which do not need to be knots and links,  and to give graphs which have such a generalized property. We say that a graph $G$ is {\it intrinsically non-trivial} if for every spatial embedding $f$ of $G$ there exists a planar subgraph $F$ of $G$ such that $f|_{F}$ is not trivial. It is clear that intrinsically $n$-linked graphs are intrinsically non-trivial. Note that $F$ depends on $f$ and does not need to have the same topological type uniformly. In this situation, a pioneer work has been done by Foisy \cite{Foisy06}. Let $P$ and $P'$ be graphs in the {\it Petersen family}, which is a family of seven graphs obtained from $K_{6}$ by {\it $\nabla - Y$} or {\it $Y - \nabla$ exchanges} \cite{Sachs84}, and all intrinsically linked graphs have a minor in this family \cite{RST3}. Let $P {*}_{4}P'$ be the graph which consists of $P$ and $P'$ connected by four disjoint edges $e_{1},e_{2},e_{3}$ and $e_{4}$ as illustrated in Fig. \ref{K6K6_4} (Fig. \ref{K6K6_4} shows the case of $P=P'=K_{6}$). Then he showed that $P {*}_{4}P'$ is {\it intrinsically knotted or $3$-linked}, namely every spatial embedding of $P{*}_{4}P'$ contains either a non-trivial knot or a non-splittable $3$-component link. He also showed that $K_{6} {*}_{4}K_{6}$ is neither intrinsically knotted nor intrinsically $3$-linked.  
\begin{figure}[htbp]
      \begin{center}
\scalebox{0.4}{\includegraphics*{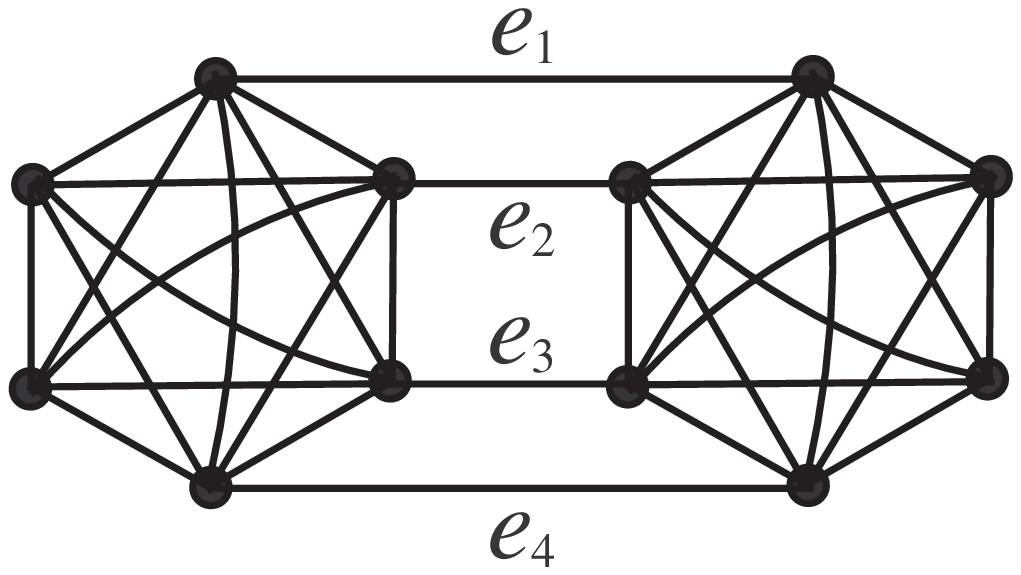}}
      \end{center}
   \caption{$K_{6}{*}_{4}K_{6}$}
  \label{K6K6_4}
\end{figure} 

We have already known that intrinsically knotted graphs and intrinsically $n$-linked graphs ($n\ge 3$) are intrinsically linked by Robertson-Seymour-Thomas' characterization of intrinsically linked graphs \cite{RST3}. First of all, we show that if a graph $G$ is intrinsically non-trivial then every spatial embedding of $G$ must contain a non-splittable $2$-component link as follows. 

\begin{Theorem}\label{int_2}
Intrinsically non-trivial graphs are intrinsically linked. 
\end{Theorem}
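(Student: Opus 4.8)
The plan is to prove the contrapositive: if $G$ is \emph{not} intrinsically linked, then $G$ is not intrinsically non-trivial, i.e.\ some single spatial embedding $f$ of $G$ has the property that $f|_{F}$ is trivial for \emph{every} planar subgraph $F$ of $G$. The embedding $f$ that does this will come from Robertson--Seymour--Thomas' solution of the Sachs conjecture \cite{RST3}: a graph is linklessly embeddable (equivalently, not intrinsically linked) if and only if it admits a \emph{flat} (panelled) embedding, one in which every cycle bounds a disk in ${\mathbb S}^{3}$ whose interior is disjoint from the graph. So I would begin by fixing such a flat embedding $f$ of $G$.

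Next I would fix an arbitrary planar subgraph $F\subseteq G$ and show $f|_{F}$ is trivial. First note that $f|_{F}$ is again flat, since every cycle of $F$ is a cycle of $G$ and a disk bounded by it whose interior misses $f(G)$ \emph{a fortiori} has interior missing $f(F)$. The main step is then to upgrade flatness of $f|_{F}$ to triviality using planarity of $F$: choose a planar embedding of $F$ in ${\mathbb S}^{2}$, and for each bounded face realize its boundary cycle by a disk supplied by flatness; after an innermost-disk cut-and-paste making these disks mutually disjoint --- here one invokes the stronger form of flatness, also due to Robertson--Seymour--Thomas, that any finite family of mutually disjoint cycles of a flat embedding bounds mutually disjoint disks with interiors off the graph --- their union is a $2$-sphere containing $f(F)$, so $f|_{F}$ is trivial. (If $F$ is disconnected one first pushes its components into disjoint balls, again a consequence of flatness.) Since $F$ was arbitrary, $G$ is not intrinsically non-trivial, completing the contrapositive. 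Equivalently, one may simply quote the known fact that a panelled spatial embedding of a planar graph is trivial, so that the proof reduces to the chain: linklessly embeddable $\Rightarrow$ flat embedding $\Rightarrow$ every planar subgraph panelled $\Rightarrow$ every planar subgraph trivial $\Rightarrow$ not intrinsically non-trivial.

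The delicate part is precisely the passage from ``each cycle bounds a disk'' to ``the whole planar subgraph is unknotted'': one must control the intersections of the chosen face-disks and verify that the capping surface is genuinely a $2$-sphere rather than a higher-genus closed surface, which is an Euler-characteristic bookkeeping together with general-position arguments, and this is exactly where the disjoint-disks refinement of flatness is indispensable (and, for disconnected $F$, where the splitting of components is needed). Everything else --- the contrapositive reformulation and the restriction of flatness to subgraphs --- is formal, so the heart of the proof is really the input about flat/panelled embeddings of planar graphs.
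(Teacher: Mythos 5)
Your overall strategy is sound and, at the structural level, matches the paper's: you prove the contrapositive by taking a flat (panelled) embedding supplied by Robertson--Seymour--Thomas and arguing that its restriction to every planar subgraph is trivial. The difference is in how the key step ``flat restriction of a planar subgraph $\Rightarrow$ trivial'' is justified. The paper never constructs a sphere at all: it routes the argument through freeness of fundamental groups, using Scharlemann--Thompson to convert ``$f|_{F}$ non-trivial'' into ``some subgraph has non-free complement,'' and then RST (3.3) (Wu in the planar case) to say that in a flat embedding \emph{every} subgraph has free complement. Your fallback of ``quoting the known fact that a panelled embedding of a planar graph is trivial'' is legitimate precisely because that fact is the conjunction of these two cited results, so in that form your proof is correct and essentially equivalent to the paper's.

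Your \emph{direct} geometric justification of that fact, however, has real gaps. The face boundaries of a planar embedding of $F$ are not pairwise disjoint cycles --- adjacent faces share edges --- so the ``disjoint cycles bound disjoint disks'' refinement of flatness does not apply to them; what you actually need is a system of disks whose interiors are pairwise disjoint but which are allowed (indeed required) to meet along the shared edges of $f(F)$, and producing such a system from flatness is essentially the whole difficulty, not a corollary of the disjoint-cycles statement. Moreover, if $F$ is not $2$-connected, face boundaries need not be cycles at all (edges can be traversed twice), so flatness does not even hand you a candidate disk for such a face without a further decomposition argument. These are exactly the complications the paper sidesteps by working with freeness instead of with explicit spanning surfaces; if you want a self-contained proof rather than a citation, you should either supply the innermost-disk argument that makes the face-disks compatible along shared edges, or simply adopt the Scharlemann--Thompson/Wu route.
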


Then, for an intrinsically non-trivial graph $G$, we are interested in the collection of non-trivial spatial graph types (except for $2$-component link types) where every spatial embedding of $G$ contains at least one spatial graph type in the collection. Foisy's above result also paid attention to only constituent knots and links of spatial graphs. We shall consider more smaller graph than $P {*}_{4} P'$ from a viewpoint of an intrinsic non-triviality. Let $P {*}_{3} P'$ be the graph which is obtained from $P {*}_{4} P'$ by deleting $e_{4}$. Then we have the following. 

\begin{Theorem}\label{main}
Let $P$ and $P'$ be graphs in the Petersen family. Then, every spatial embedding of $P{*}_{3}P'$ contains either a non-splittable $3$-component link or an irreducible spatial handcuff graph whose constituent $2$-component link is split. 
\end{Theorem}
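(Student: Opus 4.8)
The plan is to combine the Conway--Gordon--Sachs congruence for the Petersen family with a splitting-sphere analysis, and then, in the difficult case, to build the required handcuff graph from the forced links on the two sides together with the connecting edges.

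First I would record the linking input. Each member $Q$ of the Petersen family satisfies a mod $2$ Conway--Gordon relation: for every spatial embedding $g$ of $Q$ one has $\sum_{\gamma\cup\gamma'}\operatorname{lk}(g(\gamma),g(\gamma'))^{2}\equiv 1\pmod 2$, the sum running over all $2$-component links $\gamma\cup\gamma'$ of $Q$ (this is Conway--Gordon for $K_{6}$ and is preserved under $\nabla$--$Y$ and $Y$--$\nabla$ exchanges). Hence $g$ always contains a $2$-component link with odd, in particular nonzero, linking number, and one checks (immediate for $K_{6}$, whose relevant cycles are triangles, and a short verification for the other members) that its components may be taken unknotted. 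Applying this to $f|_{P}$ and $f|_{P'}$ yields links $\lambda_{1}\cup\lambda_{2}\subseteq P$ and $\lambda_{1}'\cup\lambda_{2}'\subseteq P'$ with unknotted components and odd linking numbers; so $f(\lambda_{1})\cup f(\lambda_{2})$ and $f(\lambda_{1}')\cup f(\lambda_{2}')$ are non-splittable.

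Next I would set up a dichotomy. For any cycle $\mu'$ of $P'$ the cycles $\lambda_{1},\lambda_{2},\mu'$ are pairwise disjoint, so $f|_{\lambda_{1}\cup\lambda_{2}\cup\mu'}$ is a $3$-component link, and symmetrically with $P$ and $P'$ interchanged. If some such link (for some forced link on one side and some cycle on the other) is non-splittable, we are in the first alternative. Otherwise each such link is splittable; since $f(\lambda_{1})\cup f(\lambda_{2})$ is non-splittable, every splitting sphere must separate $f(\mu')$ from $f(\lambda_{1}\cup\lambda_{2})$, so each cycle of $P'$ lies in a ball missing $f(\lambda_{1}\cup\lambda_{2})$. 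An innermost-disc argument on these spheres together with the connectedness of $P'$ (enlarging the first alternative by a few more $3$-component links if necessary) then lets me assume $f(P')$ lies in a ball $B$ with $B\cap f(\lambda_{1}\cup\lambda_{2})=\emptyset$; in particular $f(\lambda_{1})\cup f(\lambda_{1}')$ is split.

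In this second case I would construct the handcuff. Arrange notation so that the endpoint $v_{i}'$ of some $e_{i}$ in $P'$ lies on $\lambda_{1}'$, and let $\alpha=p\cup e_{i}$, where $p$ is a path in $P$ from a vertex of $\lambda_{1}$ to the $P$-endpoint $v_{i}$ of $e_{i}$, routed so that it runs once along $\lambda_{2}$ except for a single edge $\epsilon$ of $\lambda_{2}$. Then $H:=f(\lambda_{1}\cup\alpha\cup\lambda_{1}')$ is a spatial handcuff graph whose constituent $2$-component link $f(\lambda_{1})\cup f(\lambda_{1}')$ is split. To detect non-triviality I would use a mod $2$ invariant: choose a disc $D$ with $\partial D=f(\lambda_{1})$ and $D\cap f(\lambda_{1}')=\emptyset$ (available since $f(\lambda_{1})$ is an unknot split from $f(\lambda_{1}')$) and set $\ell(H)=\#(f(\alpha)\cap\operatorname{int}D)\bmod 2$; this is independent of $D$ (two choices differ by a $2$-sphere met by the arc $f(\alpha)$ in an even number of points) and vanishes whenever $H$ is planar, so $\ell(H)=1$ forces $H$ non-trivial.

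The main obstacle is to show that one can choose things so that $\ell(H)=1$. Expanding $\ell(H)$ over the edges of $\alpha$ and using $\#(f(\lambda_{2})\cap D)\equiv\operatorname{lk}(f(\lambda_{1}),f(\lambda_{2}))\equiv 1\pmod 2$, the quantity $\ell(H)$ equals $1$ plus the intersection numbers with $D$ of a few short edges ($\epsilon$, the first and last edges of $p$, and $e_{i}$) that the linking number does not control; I would eliminate these by summing the invariant over an appropriate finite family of such handcuff subgraphs — varying the omitted edge $\epsilon$, the connecting edge $e_{i}$, and, if needed, the forced links — so that the uncontrolled terms cancel in pairs and the total is odd, yielding some $H$ with $\ell(H)=1$. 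Irreducibility of this $H$ should then follow from $\ell(H)=1$: a sphere meeting $\alpha$ transversely once splits $\ell(H)$ into contributions of its two sides, so $\ell(H)=1$ concentrates all of it on one side and forces the other side to be standard, while $H$ is not split and has no local knot since its loops are unknots, its constituent link is split, and $\ell(H)\neq 0$. I expect the genuine difficulties to be the cancellation bookkeeping for $\ell(H)$, the passage from ``every cycle of $P'$ splits off'' to ``$f(P')$ splits off'', and matching the construction to the precise notion of an irreducible spatial handcuff graph; the seven members of the Petersen family may each require a small separate check.
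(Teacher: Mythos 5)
Your overall strategy (force odd-linking links on both sides, split into a ``$3$-link'' case and a ``handcuff'' case, and detect the handcuff with an invariant that vanishes on trivial embeddings) parallels the paper in outline, but the two steps that carry all the weight do not work as proposed.

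First, the detecting invariant $\ell(H)=\#(f(\alpha)\cap\operatorname{int}D)\bmod 2$ is not well defined, and the one-line justification fails precisely because the arc $f(\alpha)$ has an endpoint \emph{on} $\partial D$: the parity of intersection of an arc with the $2$-cycle $D\cup D'$ is controlled by the positions of its two endpoints relative to that cycle, and one endpoint sits on the singular circle $\partial D=\partial D'$, so no parity statement is available. Concretely, take $\lambda_{1}$ a round circle with flat spanning disk, $\lambda_{1}'$ a distant circle, and let $\alpha$ leave $\lambda_{1}$, arch over the rim, pierce the flat disk once, pass under the rim and run out to $\lambda_{1}'$. This handcuff graph is \emph{trivial} (drag $\lambda_{1}'$ back through the spanning disk of $\lambda_{1}$ along $\alpha$), so transporting the flat disk of the planar position back through that isotopy produces an admissible disk meeting $\alpha$ zero times, while the original flat disk meets it once. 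So $\ell$ depends on $D$, and in particular ``$\ell=1$ forces $H$ non-trivial'' is false. Moreover, even a repaired first-order mod~$2$ invariant could not succeed here: the handcuff graphs that actually arise are modelled on the paper's $f_{r,s}$, which are obtained from the trivial handcuff by full twists (clasps) of the edge through the loops; these change any disk-intersection count of the edge by an even number, yet $f_{r,s}$ is irreducible. Detecting them genuinely requires a second-order invariant — the paper uses $n(f)=a_{2}(K_{D}(f))-a_{2}(f(\gamma_{1}))-a_{2}(f(\gamma_{2}))$ for a disk-sum $K_{D}(f)$, proves it vanishes on reducible handcuffs, computes $n(f_{r,s})=2rs$, and controls it under Delta moves via the alternating sum $\xi$, using the Soma--Sugai--Yasuhara classification of planar spatial graphs up to Delta equivalence by linking numbers. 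None of this is replaceable by the mod~$2$ count.

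Second, two further inputs of your argument are unjustified. The components of the forced odd-linking link need not be unknotted (a cycle of a spatial $K_{6}$ can be any knot, and Conway--Gordon controls only linking numbers), so the spanning disk $D$ with $\partial D=f(\lambda_{1})$ need not exist at all; and the passage from ``every cycle of $P'$ is split from $f(\lambda_{1}\cup\lambda_{2})$'' to ``$f(P')$ lies in a ball missing $f(\lambda_{1}\cup\lambda_{2})$'' is exactly the kind of global splitting statement that is hard to extract from cycle-by-cycle information. The paper sidesteps this entirely: its dichotomy is combinatorial (either two of the three connecting edges join the same pair $\gamma_{l},\gamma_{k}'$, in which case Bowlin--Foisy gives a non-splittable $3$-component link, or the edges form a path pattern, in which case either $f(\gamma_{2}\cup\gamma_{1}')$ is non-split — giving a $3$-component link — or one contracts the connecting edges to a spatial $P_{4}$ and applies the $\xi$-invariant). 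You would need to replace your splitting-sphere reduction with something of this kind.
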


Here a {\it spatial handcuff graph} is a spatial embedding $f$ of the graph $H$ which is illustrated in Fig. \ref{handcuff}. Note that an orientation is given to each loop, namely we regard $f(\gamma_{1}\cup \gamma_{2})$ as an ordered and oriented $2$-component link. A spatial handcuff graph $f$ is said to be {\it irreducible} if there does not exist a $2$-sphere in ${\mathbb S}^{3}$ which intersects $f(H)$ transversely at one point. Note that an irreducible spatial handcuff graph is not trivial. We also show that $K_{6} {*}_{3} K_{6}$ have a spatial embedding which does not contain a non-splittable $3$-component link, and another spatial embedding which does not contain an irreducible spatial handcuff graph whose constituent $2$-component link is split (Example \ref{ex}). In particular, the former spatial embedding does not contain any of a non-trivial knot or a non-splittable $n$-component link for $n\ge 3$. Namely Theorem \ref{main} gives a new type of intrinsic non-triviality of graphs which cannot be detected by observing only constituent knots and links of its spatial embeddings. 
\begin{figure}[htbp]
      \begin{center}
\scalebox{0.4}{\includegraphics*{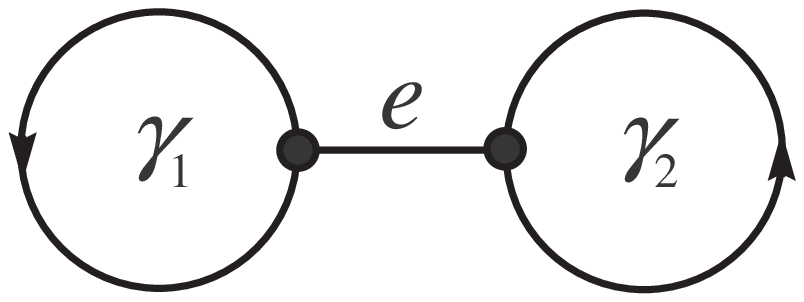}}
      \end{center}
   \caption{}
  \label{handcuff}
\end{figure} 

In the next section, we prove Theorem \ref{int_2}. In section $3$, we recall an ambient isotopy invariant of spatial handcuff graphs which was introduced by the author in \cite{Nikkuni08}. In section $4$, we prove Theorem \ref{main}. 

\section{Proof of Theorem \ref{int_2}} 

A spatial embedding $f$ of a graph $G$ is said to be {\it free} if the fundamental group of the spatial graph complement $\pi_{1}({\mathbb S}^{3}\setminus f(G))$ is a free group. We say that $G$ is {\it intrinsically non-free} if for every spatial embedding of $G$ there exists a subgraph $F$ of $G$ such that $f|_{F}$ is not free. To prove Theorem \ref{int_2}, we show the following. 

\begin{Theorem}\label{int_2_lemma}
The following are equivalent. 
\begin{enumerate}
\item $G$ is intrinsically non-free. 
\item $G$ is intrinsically non-trivial. 
\item $G$ is intrinsically linked. 
\end{enumerate}
\end{Theorem}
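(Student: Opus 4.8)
The plan is to prove the three equivalences via the cycle of implications (1) $\Rightarrow$ (3) $\Rightarrow$ (2) $\Rightarrow$ (1), since two of these three arrows are either immediate or follow from classical facts. The implication (3) $\Rightarrow$ (2) is essentially trivial: a non-splittable $2$-component link is a non-trivial spatial embedding of a planar subgraph (a disjoint union of two cycles), so any intrinsically linked graph is automatically intrinsically non-trivial. This uses only that a split link is the trivial spatial embedding of two disjoint circles, which is standard.

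For the implication (2) $\Rightarrow$ (1), I would argue by contraposition. Suppose $G$ is not intrinsically non-free, so there is a spatial embedding $f$ of $G$ with $f|_F$ free for \emph{every} subgraph $F$ of $G$; I want to conclude $f|_{F}$ is trivial for every planar subgraph $F$, so that $G$ is not intrinsically non-trivial. The key classical input here is the theorem of Scharlemann--Thompson (or the earlier observation going back to work on planar graphs) that a spatial embedding of a planar graph is trivial if and only if the embedding of every subgraph is free — equivalently, a spatial embedding of a planar graph $F$ with free complement (together with the same property passed to all subgraphs) is unknotted. Actually the cleanest statement to invoke is: \emph{a planar graph $F$ has the property that a spatial embedding $g$ of $F$ is trivial if and only if $g|_{F'}$ is free for every subgraph $F'$ of $F$}; this is exactly Scharlemann--Thompson's characterization of the trivial spatial embedding of a planar graph. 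Applying this with $g = f|_F$ gives triviality of $f|_F$, as desired.

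The implication (1) $\Rightarrow$ (3), again by contraposition, is where I expect the real content to lie. Assume $G$ is not intrinsically linked; I must produce a spatial embedding $f$ of $G$ such that $f|_F$ is free for every subgraph $F$. By the Robertson--Seymour--Thomas characterization \cite{RST3}, $G$ not intrinsically linked means $G$ has no minor in the Petersen family; equivalently, $G$ admits a spatial embedding in which every two disjoint cycles form a split link — a ``linklessly embeddable'' graph, and in fact one can take a \emph{flat} (or \emph{panelled}) embedding, where every cycle of $f(G)$ bounds a disk in ${\mathbb S}^3$ with interior disjoint from $f(G)$. The crucial fact, due to Robertson--Seymour--Thomas (and anticipated by Böhme and by Wu), is that a graph is linklessly embeddable if and only if it has a flat embedding, and that a flat embedding restricted to any subgraph is again flat. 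So the plan is: take $f$ to be a flat embedding of $G$; then for any subgraph $F$, $f|_F$ is flat, hence every cycle of $f(F)$ bounds a disk with interior off $f(F)$, and a standard argument (spanning-disk/Dehn's-lemma-type reasoning, or directly the fact that flat embeddings of any graph have free complement, which is part of the Robertson--Seymour--Thomas package) shows $\pi_1({\mathbb S}^3 \setminus f(F))$ is free.

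The main obstacle — and the step I would want to present carefully rather than wave through — is the last assertion that a flat spatial embedding has free fundamental group of the complement. One route is to induct on the number of edges: adding an edge whose endpoints lie on the already-embedded flat subgraph, while keeping flatness, corresponds on the level of fundamental groups to either a free product with ${\mathbb Z}$ (if the edge can be pushed to the boundary of a regular neighborhood) or an HNN-type/amalgamation move that, because of the panel structure, still yields a free group; this is exactly the kind of bookkeeping carried out in the spatial-graph literature on panelled embeddings. I would cite \cite{RST3} for ``linklessly embeddable $\Leftrightarrow$ flat,'' cite the appropriate reference for ``flat $\Rightarrow$ free complement,'' and then assemble the three implications into the stated equivalence, which together with (3) $\Rightarrow$ (2) $\Rightarrow$ (1) closes the cycle and, in particular, proves Theorem \ref{int_2} as the implication (2) $\Rightarrow$ (3).
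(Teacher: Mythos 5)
Your proposal is correct and follows essentially the same route as the paper: the implication cycle is the same three arrows, with (3)$\Rightarrow$(2) immediate, (2)$\Rightarrow$(1) via the Scharlemann--Thompson criterion that a spatial embedding of a planar graph is trivial iff all its subgraphs have free complement, and (1)$\Rightarrow$(3) via the Robertson--Seymour--Thomas results that a non-intrinsically-linked graph admits a paneled/flat embedding and that such an embedding restricts to a free embedding on every subgraph. The paper likewise simply cites \cite{RST3} (their (1.2) and (3.3)) for the last step rather than reproving that flat embeddings have free complements.
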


\begin{proof}
It is clear that (3) implies (2). Next we show that (2) implies (1). If $G$ is intrinsically non-trivial, we have that for every spatial embedding $f$ of $G$ there exists a planar subgraph $F$ of $G$ such that $f|_{F}$ is non-trivial. Then by Scharlemann-Thompson's famous criterion \cite{ST91}, there exists a subgraph $F'$ of $F$ such that $f|_{F'}$ is not free. Since $F'$ is also a subgraph of $G$, we have that $G$ is intrinsically non-free. Finally we show that (1) implies (3). Assume that $G$ is not intrinsically linked. Then it follows from Robertson-Seymour-Thomas \cite[(1.2)]{RST3} that there exists a spatial embedding $f$ of $G$ such that for any cycle $\gamma$ of $G$ there exists a $2$-disk $D_{\gamma}$ in ${\mathbb S}^{3}$ such that $f(G)\cap D_{\gamma}=f(G)\cap \partial D_{\gamma}=f(\gamma)$. At this time, it is  also known that $f|_{F}$ is free for any subgraph $F$ of $G$ \cite[(3.3)]{RST3} (the case that $G$ is planar was first shown by Wu \cite{Wu92}). Thus we have that $G$ is not intrinsically non-free. 
\end{proof}

\section{An invariant of spatial handcuff graphs} 

In this section we give the definition of an invariant of spatial handcuff graphs which can detect an irreducible one whose constituent $2$-component link is split. Let $L=J_{1}\cup J_{2}$ be an ordered and oriented $2$-component link. Let $D$ be an oriented $2$-disk and $x_{1},x_{2}$ disjoint arcs in $\partial D$, where $\partial D$ has the orientation induced by the one of $D$, and each arc has an orientation induced by the one of $\partial D$. We assume that $D$ is embedded in ${\mathbb S}^{3}$ so that $D \cap L=x_{1}\cup x_{2}$ and $x_{i}\subset J_{i}$ with opposite orientations for each $i$. Then we call a knot $K_{D}=\left(L\cup \partial D\right)\setminus \left({\rm int}x_{1}\cup {\rm int}x_{2}\right)$ a {\it $D$-sum} of $L$. For a spatial handcuff graph $f$, we denote $f(\gamma_{1}\cup \gamma_{2})$ by $L_{f}$ and consider a $D$-sum of $L_{f}$ so that $f(e)\subset D$, $f(e)\cap \partial D=f(e)\cap L_{f}=\left\{p_{1},p_{2}\right\}$ and $p_{i}\in {\rm int}x_{i}\ (i=1,2)$. We call such a $D$-sum of $L_{f}$ a {\it $D$-sum of $L_{f}$ with respect to $f$} and denote it by $K_{D}(f)$. Though $K_{D}(f)$ is not uniquely determined up to ambient isotopy, the author showed in \cite[Remark 3.4 (1)]{Nikkuni08} that the modulo ${\rm lk}(L_{f})$ reduction of $a_{2}(K_{D}(f))$ is an ambient isotopy invariant of $f$, where ${\rm lk}$ denotes the {\it linking number} in ${\mathbb S}^{3}$ and $a_{2}$ denotes the second coefficient of the {\it Conway polynomial}. Then we define that 
\begin{eqnarray*}
n(f,D)=a_{2}(K_{D}(f))-a_{2}(f(\gamma_{1}))-a_{2}(f(\gamma_{2})) 
\end{eqnarray*}
and denote the modulo ${\rm lk}(L_{f})$ reduction of $n(f,D)$ by $\bar{n}(f)$. It is clear that $\bar{n}(f)$ is also an ambient isotopy invariant of $f$. In particular, $\bar{n}(f)$ is a uniquely determined integer if ${\rm lk}(L_{f})=0$. In this case we denote $\bar{n}(f)$ by $n(f)$ simply. Then we have the following. 

\begin{Lemma}\label{nf_irr}
Let $f$ be a spatial handcuff graph. If $f$ is not irreducible, then for every choice of $D$, $n(f,D)=0$. In particular, if ${\rm lk}(L_{f})=0$, then $f$ is irreducible if $n(f)\neq 0$. 
\end{Lemma}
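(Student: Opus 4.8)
The plan is to show that if $f$ is not irreducible, then $L_f = f(\gamma_1 \cup \gamma_2)$ is a split link and $f(e)$ can be taken to lie in a ball separated from one of the loops, so that any $D$-sum $K_D(f)$ is forced to be isotopic to a connected sum $f(\gamma_1) \# f(\gamma_2)$ (with one component possibly reversed, which does not affect $a_2$). Then the additivity of $a_2$ under connected sum gives $a_2(K_D(f)) = a_2(f(\gamma_1)) + a_2(f(\gamma_2))$, hence $n(f,D) = 0$. The contrapositive of the resulting statement, restricted to the case ${\rm lk}(L_f) = 0$ where $\bar n(f) = n(f)$ is a well-defined integer independent of $D$, is exactly: $n(f) \neq 0$ implies $f$ is irreducible.

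First I would unpack the hypothesis. Since $f$ is not irreducible, there is a $2$-sphere $S$ meeting $f(H)$ transversely in a single point $q$; this point must lie in the interior of the edge $f(e)$, since a sphere meeting a graph transversely in one point meets it at a non-vertex point of valence contributing odd intersection, and the loops $f(\gamma_i)$ contribute even intersection numbers with $S$. So $S$ cuts ${\mathbb S}^3$ into two balls $B_1, B_2$ with $f(\gamma_i) \subset {\rm int}\,B_i$ (after relabeling) and $f(e) \cap B_i$ a single arc from the attaching point on $f(\gamma_i)$ to $q$. In particular $S' = \partial B_1$ is a $2$-sphere disjoint from $L_f$ with $f(\gamma_1)$ on one side and $f(\gamma_2)$ on the other, so $L_f$ is split and ${\rm lk}(L_f) = 0$ automatically, and $\bar n(f) = n(f)$.

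Next I would analyze an arbitrary $D$-sum with respect to $f$. Recall $D$ is an oriented disk with $D \cap L_f = x_1 \cup x_2$, $x_i \subset f(\gamma_i)$, and $f(e) \subset D$ with $f(e) \cap \partial D = \{p_1, p_2\}$, $p_i \in {\rm int}\,x_i$; the knot is $K_D(f) = (L_f \cup \partial D) \setminus ({\rm int}\,x_1 \cup {\rm int}\,x_2)$. The key point is that $f(e)$ is an arc in $D$ joining $p_1$ to $p_2$, so it splits $D$ into two sub-disks; combining $f(e)$ with the appropriate sub-arcs of $\partial D$ produces two disks $D_1, D_2$ with $\partial D_i \subset f(\gamma_i) \cup f(e)$. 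I would use these to see $K_D(f)$ as built from the two loops joined by a band running along $f(e)$, i.e. as a connected sum of $f(\gamma_1)$ and $f(\gamma_2)$ along the arc $f(e)$; because the splitting sphere $S'$ (or its image under an isotopy pushing $D$ into standard position near $f(e)$) realizes this connected sum as a genuine (distant) connected sum, $K_D(f) \simeq f(\gamma_1) \# \overline{f(\gamma_2)}$ up to orientation of the summands. Then $a_2(K_D(f)) = a_2(f(\gamma_1)) + a_2(f(\gamma_2))$ by additivity of $a_2$, giving $n(f,D) = 0$ as desired.

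The main obstacle I anticipate is the last step: making precise that an \emph{arbitrary} admissible $D$ — which may be knotted and linked with the loops in a complicated way — still yields a knot $K_D(f)$ that is a distant connected sum of the two constituent knots. The resolution should come from the invariance statement quoted from \cite[Remark 3.4 (1)]{Nikkuni08}: modulo ${\rm lk}(L_f) = 0$, the value $a_2(K_D(f))$ does not depend on $D$, so it suffices to compute $n(f,D)$ for one conveniently chosen $D$, namely one that follows $f(e)$ closely and meets each $f(\gamma_i)$ in a tiny arc near the attaching point. For that special $D$, the splitting sphere $S'$ shows directly that $K_D(f)$ is the distant connected sum $f(\gamma_1) \# f(\gamma_2)$, completing the argument.
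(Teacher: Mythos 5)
Your proof is correct and follows essentially the same route as the paper: non-irreducibility forces the sphere to meet $f(H)$ in an interior point of $f(e)$, so $L_{f}$ is split and the $D$-sum is a connected sum of $f(\gamma_{1})$ and $f(\gamma_{2})$, whence $n(f,D)=0$ by additivity of $a_{2}$. You are in fact somewhat more careful than the paper's one-line argument, which simply asserts that \emph{any} $D$-sum is the connected sum; your reduction to a single convenient $D$ via the quoted $D$-independence of $a_{2}(K_{D}(f))$ modulo ${\rm lk}(L_{f})=0$ is a legitimate way to justify that step for arbitrary (possibly tangled) disks $D$.
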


\begin{proof}
If $f$ is not irreducible, then $L_{f}$ is split and any $D$-sum of $L_{f}$ with respect to $f$ is the connected sum of $f(\gamma_{1})$ and $f(\gamma_{2})$. Recall that $a_{2}$ is additive under the connected sum of knots \cite{Kauffman87}. Thus we have that 
\begin{eqnarray*}
n(f,D)&=&a_{2}(K_{D}(f))-a_{2}(f(\gamma_{1}))-a_{2}(f(\gamma_{2}))\\
&=& a_{2}(f(\gamma_{1})\sharp f(\gamma_{2}))-a_{2}(f(\gamma_{1}))-a_{2}(f(\gamma_{2}))\\
&=& a_{2}(f(\gamma_{1}))+a_{2}(f(\gamma_{2}))-a_{2}(f(\gamma_{1}))-a_{2}(f(\gamma_{2}))\\
&=&0.
\end{eqnarray*}
Therefore we have the result. 
\end{proof}

For integers $r$ and $s$, let $f_{r,s}$ be the spatial handcuff graph as illustrated in Fig. \ref{handcuff_mn}, where the rectangles represented by $r$ and $s$ stand for $|r|$ full twists and $|s|$ full twists as illustrated in Fig. \ref{handcuff_mn2}, respectively. Note that the constituent $2$-component link $L_{f_{r,s}}$ is trivial. Then we have the following. 
\begin{figure}[htbp]
      \begin{center}
\scalebox{0.4}{\includegraphics*{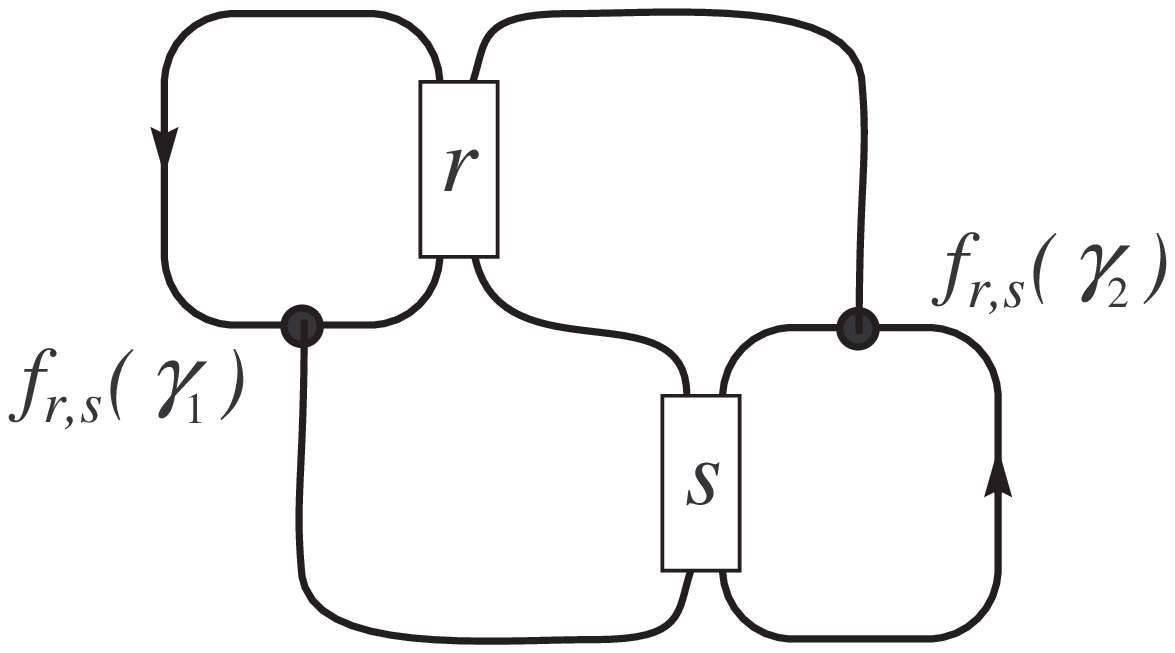}}
      \end{center}
   \caption{}
  \label{handcuff_mn}
\end{figure} 
\begin{figure}[htbp]
      \begin{center}
\scalebox{0.35}{\includegraphics*{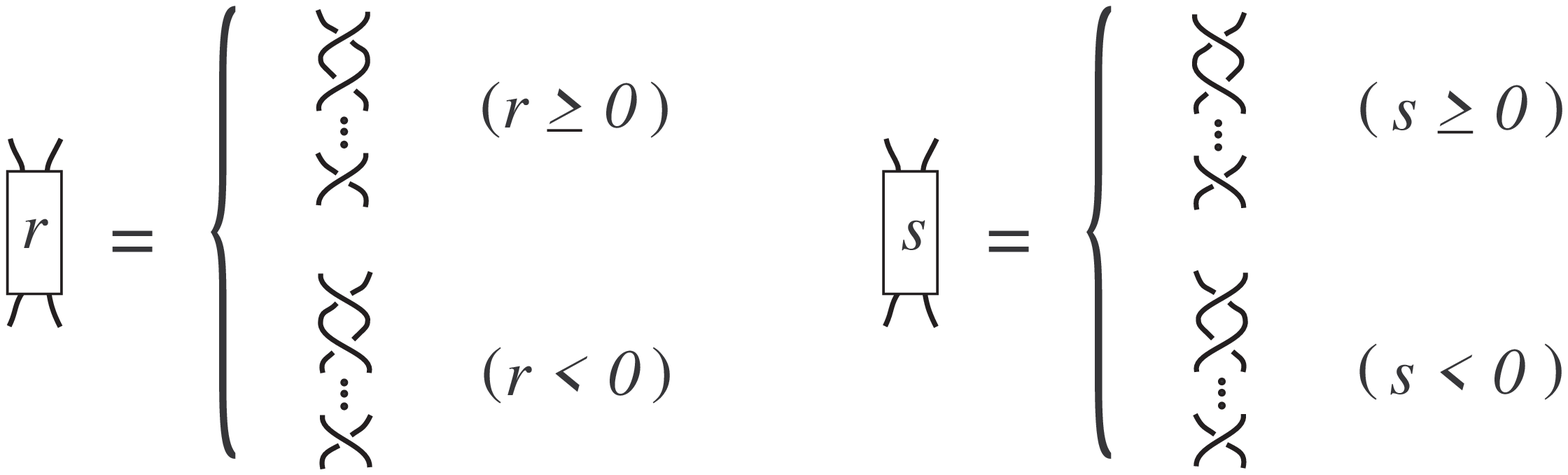}}
      \end{center}
   \caption{}
  \label{handcuff_mn2}
\end{figure} 

\begin{Lemma}\label{nf_irr_ex}
$n(f_{r,s})=2rs$. 
\end{Lemma}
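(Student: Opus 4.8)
The plan is to compute $n(f_{r,s}) = n(f_{r,s}, D)$ for a conveniently chosen spanning disk $D$ by identifying the resulting $D$-sum knot $K_D(f_{r,s})$ explicitly and then evaluating its Conway polynomial coefficient $a_2$. Since the constituent link $L_{f_{r,s}} = f_{r,s}(\gamma_1 \cup \gamma_2)$ is trivial, we have ${\rm lk}(L_{f_{r,s}}) = 0$, so $n(f_{r,s})$ is a well-defined integer independent of the choice of $D$; this lets us pick the most transparent disk. The natural choice is a disk $D$ whose boundary runs along the edge $f_{r,s}(e)$ and closes up the two loops $f_{r,s}(\gamma_1)$, $f_{r,s}(\gamma_2)$ in the obvious planar way suggested by Fig.~\ref{handcuff_mn}, so that $K_D(f_{r,s})$ is obtained by banding the two loops together along $e$.

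First I would unwind the picture: each loop $\gamma_i$ carries a block of full twists ($|r|$ full twists on one loop, $|s|$ on the other), and $a_2$ of each individual loop $f_{r,s}(\gamma_i)$ is zero because a single loop with a string of full twists in it is just an unknot (the full twists can be undone when the loop is considered alone). Hence $n(f_{r,s}) = a_2(K_D(f_{r,s}))$, and the whole problem reduces to identifying the knot $K_D(f_{r,s})$ and computing its $a_2$. I expect that $K_D(f_{r,s})$ is (isotopic to) a twist knot or, more precisely, a double-twist knot built from an $r$-twisted region and an $s$-twisted region — essentially the knot one gets by taking the band sum of the two twisted loops, which produces a clasp-type tangle closure.

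Next I would compute $a_2$ of that knot. The cleanest route is a skein/crossing-change argument: pick one crossing in the $r$-twist block, apply the Conway skein relation $\nabla_{L_+} - \nabla_{L_-} = z\,\nabla_{L_0}$, and set up a recursion in $r$ (and symmetrically in $s$). Changing a crossing in the $r$-block decreases $|r|$ by one, and the smoothed diagram $L_0$ is a link whose $a_1$ (the linear coefficient of its Conway polynomial, which feeds into $a_2$ via the skein relation) is controlled by $s$; iterating gives a linear recursion whose solution is $a_2(K_D(f_{r,s})) = 2rs$. Alternatively one can recognize $K_D(f_{r,s})$ directly as a known family — e.g., the twist-knot/pretzel family $P(2r, 2s, \ldots)$ or a rational knot with fraction determined by $r$ and $s$ — and quote the standard formula for $a_2$ of that family; I would present whichever identification makes the figure-to-formula step least painful, probably carrying along a small auxiliary figure.

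The main obstacle is the bookkeeping of orientations and signs: the $D$-sum construction requires the arcs $x_1 \subset J_1$ and $x_2 \subset J_2$ to be glued with \emph{opposite} orientations, and getting the sign of the $2rs$ right (rather than $-2rs$, or $2rs$ only up to the sign conventions on $r$ and $s$) demands careful tracking of how the orientations of the two loops interact with the sign of the twist blocks in Fig.~\ref{handcuff_mn2}. A secondary subtlety is verifying that the particular disk $D$ I choose is genuinely admissible in the sense of Section~3 (i.e., $D \cap L_{f_{r,s}}$ is exactly two arcs $x_1 \cup x_2$ with $x_i \subset J_i$, and $f_{r,s}(e) \subset D$ meeting $\partial D$ in two interior points of the $x_i$); once that is checked, the invariance statement from \cite[Remark 3.4(1)]{Nikkuni08} guarantees the answer does not depend on the remaining choices, and since ${\rm lk}(L_{f_{r,s}})=0$ there is no modular ambiguity to worry about.
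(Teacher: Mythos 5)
Your proposal follows essentially the same route as the paper: reduce to computing $a_2(K_D(f_{r,s}))$ (since each loop is unknotted and ${\rm lk}(L_{f_{r,s}})=0$, so $n(f_{r,s})=a_2(K_D(f_{r,s}))$ for any admissible $D$), then run a skein recursion based on $a_2(K_+)-a_2(K_-)={\rm lk}(K_0)$ on the $r$-twist block down to the trivial base case $f_{0,s}$. The one point your sketch glosses over is that the smoothing at a single crossing of the twist block is a $2$-component link whose linking number is $r-\varepsilon+s$, i.e.\ it depends on $r$ as well as $s$, so the naive one-step recursion does not immediately yield a constant increment; the paper obtains the clean increment $2\varepsilon s$ per full twist by resolving both $K_D(f_{r,s})$ and $K_D(f_{r-\varepsilon,s})$ at suitable crossings to a common knot $J$, whose smoothings have linking numbers $r-\varepsilon+s$ and $r-\varepsilon-s$, and subtracting the two resulting skein identities.
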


\begin{proof}
Let $K_{+},\ K_{-}$ and $K_{0}$ be two oriented knots and an oriented $2$-component link which are identical except inside the depicted regions as illustrated in Fig. \ref{a2skein}. Then it is well known that $a_{2}(K_{+})-a_{2}(K_{-})={\rm lk}(K_{0})$ \cite{Kauffman83}. We consider the $D$-sum of $L_{f}$ with respect to $f_{r,s}$ and the skein tree started from $K_{D}(f_{r,s})$ as illustrated in Fig. \ref{handcuff_mn3}, where $\varepsilon=\pm 1$ is the usual sign of marked crossing. Note that $\varepsilon=1$ if $r>0$ and $-1$ if $r<0$. Then we have that 
\begin{eqnarray}
a_{2}(K_{D}(f_{r,s}))-a_{2}(J)&=&\varepsilon {\rm lk}(M_{1})=\varepsilon (r-\varepsilon +s),\label{mn1}\\
a_{2}(K_{D}(f_{r-\varepsilon,s}))-a_{2}(J)&=&\varepsilon {\rm lk}(M_{2})=\varepsilon (r-\varepsilon -s). \label{mn2}
\end{eqnarray}
Thus by (\ref{mn1}) and (\ref{mn2}) we have that 
\begin{eqnarray*}
a_{2}(K_{D}(f_{r,s}))-a_{2}(K_{D}(f_{r-\varepsilon,s}))=2\varepsilon s. 
\end{eqnarray*}
Hence we have that 
\begin{eqnarray}
a_{2}(K_{D}(f_{r,s}))&=&a_{2}(K_{D}(f_{r-\varepsilon,s}))+2\varepsilon s \nonumber\\
&=& a_{2}(K_{D}(f_{r-2\varepsilon,s}))+2\varepsilon s + 2\varepsilon s \nonumber\\
&\vdots&\nonumber\\
&=& a_{2}(K_{D}(f_{r-|r|\varepsilon,s}))+2\varepsilon |r| s\nonumber\\
&=& a_{2}(K_{D}(f_{0,s}))+2rs. \label{mn3}
\end{eqnarray}
It is easy to see that $f_{0,s}$ is trivial, namely $a_{2}(K_{D}(f_{0,s}))=0$. Therefore by (\ref{mn3}) we have that 
\begin{eqnarray*}
n(f_{r,s})=a_{2}(K_{D}(f_{r,s}))-a_{2}(f_{r,s}(\gamma_{1}))-a_{2}(f_{r,s}(\gamma_{2}))=2rs. 
\end{eqnarray*}
This completes the proof. 
\end{proof}
\begin{figure}[htbp]
      \begin{center}
\scalebox{0.5}{\includegraphics*{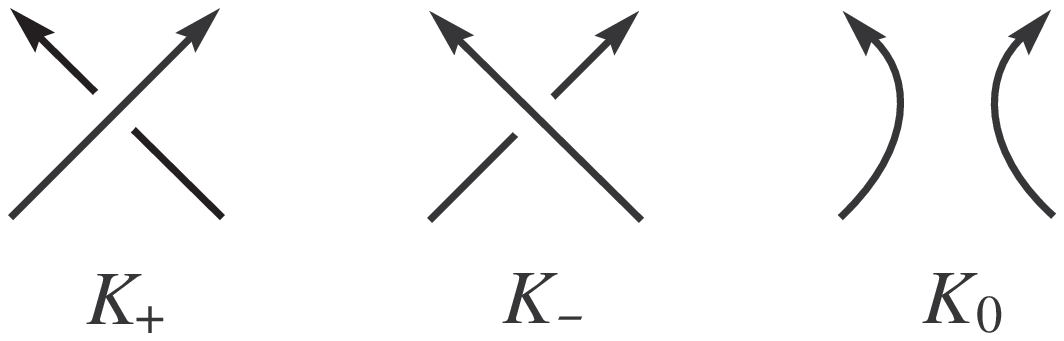}}
      \end{center}
   \caption{}
  \label{a2skein}
\end{figure} 
\begin{figure}[htbp]
      \begin{center}
\scalebox{0.375}{\includegraphics*{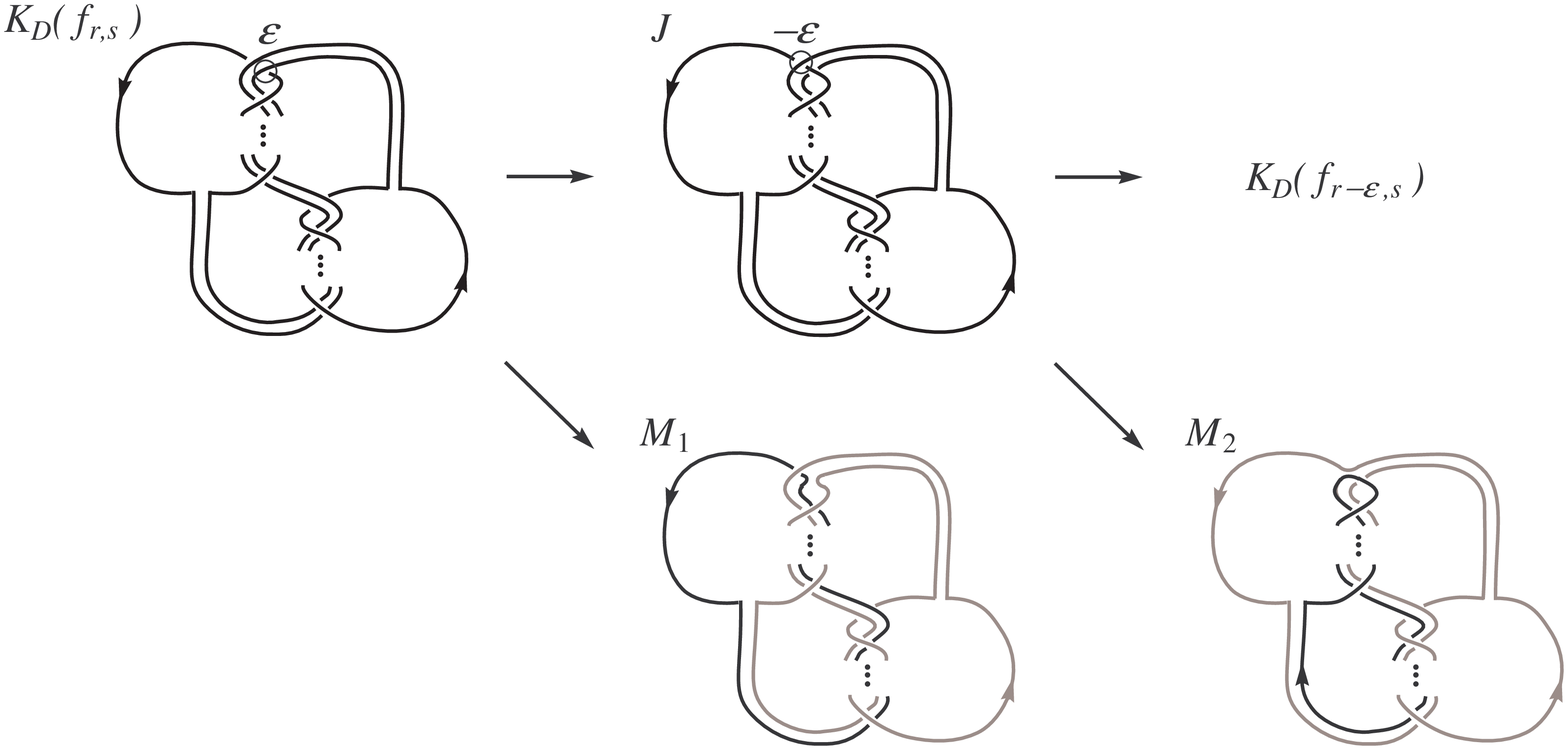}}
      \end{center}
   \caption{}
  \label{handcuff_mn3}
\end{figure} 

By Lemmas \ref{nf_irr} and \ref{nf_irr_ex}, we have that if $r,s\neq 0$ then $f_{r,s}$ is irreducible. Since $L_{f_{r,s}}$ is trivial, $\left\{f_{r,s}\right\}_{r,s\neq 0}$ is a family of {\it minimally knotted} spatial handcuff graphs, namely each $f_{r,s}$ is non-trivial and any of whose spatial proper subgraphs is trivial. 

\section{Proof of Theorem \ref{main}} 

Let $P_{4}$ be the oriented graph consists of four edges $e_{1},e_{2},e_{3},e_{4}$ and two loops $e_{5}, e_{6}$ as illustrated in Fig. \ref{DP4}. We denote the cycles $e_{5}$, $e_{1}\cup e_{2}$, $e_{3}\cup e_{4}$ and $e_{6}$ of $P_{4}$ by $c_{1},c_{2},c_{3}$ and $c_{4}$, respectively, and the subgraph $c_{1}\cup e_{i}\cup e_{j}\cup c_{4}\ (i=1,2,\ j=3,4)$ by $H_{ij}$. Note that $H_{ij}$ is homeomorphic to the graph $H$ which is illustrated in Fig. \ref{handcuff}. Let $f$ be a spatial embedding of $P_{4}$ with ${\rm lk}(f(c_{1}\cup c_{4}))=0$. Then we define $\xi(f)\in {\mathbb Z}$ by 
\begin{eqnarray*}
\xi(f)
&=&
\sum_{i,j}(-1)^{i+j}n(f|_{H_{ij}})\\
&=&n(f|_{H_{13}})-n(f|_{H_{14}})-n(f|_{H_{23}})+n(f|_{H_{24}}). 
\end{eqnarray*}
\begin{figure}[htbp]
      \begin{center}
\scalebox{0.4}{\includegraphics*{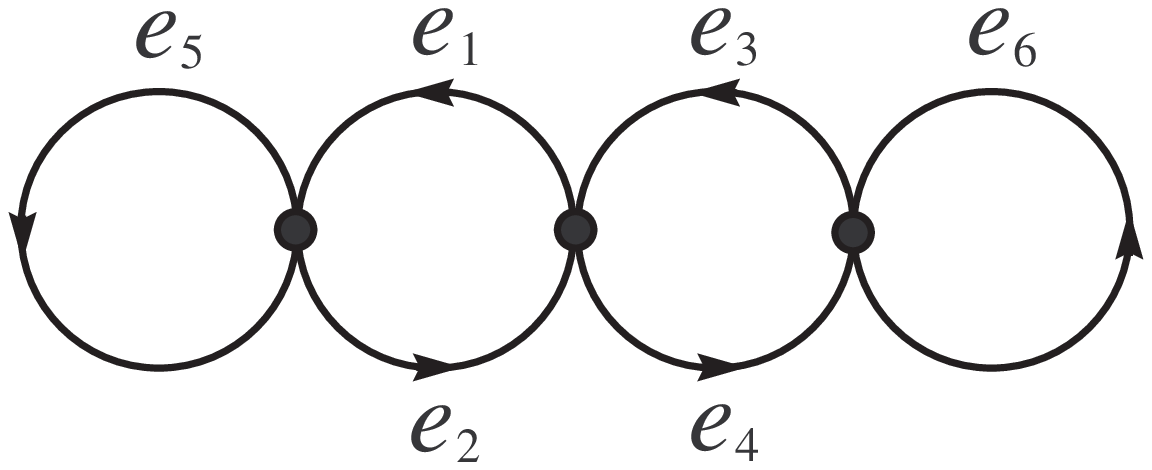}}
      \end{center}
   \caption{}
  \label{DP4}
\end{figure} 

\begin{Proposition}\label{homo_inv}
$\xi(f)$ is a Delta equivalence invariant of $f$. 
\end{Proposition}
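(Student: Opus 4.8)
The plan is to show that $\xi(f)$ is unchanged under a single Delta move performed on $f(P_4)$, since Delta equivalence is generated by such moves together with ambient isotopy (and $\xi$ is patently an ambient isotopy invariant because each $n(f|_{H_{ij}})$ is). A Delta move is a local move involving three strands of the spatial graph; by general position we may assume these three strands lie in the interiors of edges of $P_4$, and each strand belongs to one of the six edges $e_1,\dots,e_6$. So the verification splits into cases according to which edges the three strands come from. The key structural fact I would use is that a Delta move changes the second Conway coefficient $a_2$ of a knot in a controlled way, and more relevantly that $n(f|_{H_{ij}})$ is built from $a_2$ of a $D$-sum knot $K_D(f|_{H_{ij}})$; a Delta move on strands lying in $e_i,e_j$ (the two ``handle'' edges) and the loops $c_1,c_4$ lifts to a Delta move on the corresponding $D$-sum knots. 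Crucially, the loops $c_1=e_5$ and $c_4=e_6$ appear in \emph{every} $H_{ij}$, while the edge $e_i$ ($i=1,2$) appears only in $H_{i3}$ and $H_{i4}$, and $e_j$ ($j=3,4$) appears only in $H_{1j}$ and $H_{2j}$. Since $\xi(f)$ is the alternating sum $\sum_{i,j}(-1)^{i+j}n(f|_{H_{ij}})$, any Delta move whose three strands avoid at least one of the four ``handle'' edges $e_1,e_2,e_3,e_4$ affects the $n(f|_{H_{ij}})$ in pairs that cancel in the alternating sum.

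Carrying this out, I would first dispose of the easy cases: if none of the three strands lies on a handle edge $e_1,e_2,e_3,e_4$ (so all three lie on the loops $e_5,e_6$, or involve at most... ), then the move is performed inside $f(c_1\cup c_4)$ and simultaneously alters all four $n(f|_{H_{ij}})$ by the same amount (or in a way independent of the pair $(i,j)$), hence $\xi$ is unchanged because $\sum_{i,j}(-1)^{i+j}=0$. Similarly, if exactly one handle edge, say $e_1$, is touched, then only $H_{13}$ and $H_{14}$ are affected, and they enter $\xi$ with opposite signs $(-1)^{1+3}=+1$ and $(-1)^{1+4}=-1$; I must check the move changes $n(f|_{H_{13}})$ and $n(f|_{H_{14}})$ by the \emph{same} amount, which should follow because the portion of the $D$-sum construction away from $e_1$ (namely the strands on $e_5,e_6$ and the disk $D$) is identical for $H_{13}$ and $H_{14}$, so the two $D$-sum knots differ only in a region disjoint from where the Delta move happens and the move's effect on $a_2$ is the same for both. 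If exactly two handle edges are touched, they must be one of $e_1,e_2$ and one of $e_3,e_4$ (three strands can't lie on two edges that only meet loops... ), say $e_1$ and $e_3$: then $H_{13}$ is the only $H_{ij}$ containing both, but wait—$e_1$ alone sits in $H_{13},H_{14}$ and $e_3$ alone sits in $H_{13},H_{23}$, so one needs to check more carefully; the right bookkeeping is that the change in $\xi$ equals an alternating combination of ``how much the move changes $a_2$ of $K_D(f|_{H_{ij}})$'' and by the locality of the Delta move and of the $D$-sum construction this alternating combination telescopes to zero.

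The main obstacle I anticipate is the bookkeeping in the case where the three Delta-move strands are distributed among two or three distinct handle edges, because then several of the four handcuff subgraphs $H_{ij}$ genuinely receive a change, and one must prove these changes cancel in the signed sum rather than merely reshuffle. The cleanest way to control this is probably not to track $a_2$ of the $D$-sum knots directly, but to use a formula for how $a_2$ of a knot changes under a Delta move in terms of a ``linking-type'' quantity of the three strands (analogous to the skein computation in the proof of Lemma \ref{nf_irr_ex}, where $a_2(K_+)-a_2(K_-)={\rm lk}(K_0)$ is used repeatedly): the increment $\Delta a_2$ under a Delta move on three strands is a sum of pairwise linking numbers of small loops, and each such term involves at most two of the three strands. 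Then, decomposing $\xi$'s increment over these bilinear terms, each term sees at most two of the six edges of $P_4$; if both those edges are loops, or one is a loop, or they are $e_i$ and $e_j$ from opposite sides, in every case the alternating sum $\sum_{i,j}(-1)^{i+j}$ restricted to the subgraphs $H_{ij}$ that contain both relevant edges vanishes. I would organize the final argument as a short lemma computing $\Delta a_2$ under a Delta move, followed by this edge-counting observation, which together force $\xi$ to be a Delta equivalence invariant.
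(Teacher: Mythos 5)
Your overall strategy --- a case analysis on which of the six edges of $P_4$ carry the three strands of the Delta move, combined with cancellation in the alternating sum $\sum_{i,j}(-1)^{i+j}$ --- is the same as the paper's, and several of your observations are on target (the loops lie in every $H_{ij}$; a move with a strand outside $H_{ij}$ leaves $n(f|_{H_{ij}})$ unchanged, by the Brunnian property of the Delta move). But there are two genuine gaps. First, you treat the Delta move as inducing a Delta move directly on the $D$-sum knots $K_D(f|_{H_{ij}})$. That fails in precisely the cases that matter: $K_D(f|_{H_{ij}})$ is built from a disk $D$ whose interior contains the connecting path $f(e_i\cup e_j)$, so as soon as one of the three strands lies on one of $e_1,\dots,e_4$ the move displaces that path out of any fixed disk, and the old and new $D$-sum knots are not related by a single Delta move. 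The paper needs two separate geometric arguments here (Lemma \ref{handcuff_delta}): when one loop is absent from the move, slide the strands on the connecting edge onto the other loop (Fig.\ \ref{Delta2}) and invoke the known invariance of $n$ under self Delta moves; when both loops and the connecting edge appear, realize the change of the $D$-sum knot as \emph{two} Delta moves (Fig.\ \ref{Delta3}), giving $\Delta a_2\in\{0,\pm2\}$. Your proposal contains neither.

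Second, your key cancellation step --- that the move changes $n(f|_{H_{13}})$ and $n(f|_{H_{14}})$ by the same amount ``because the two $D$-sum knots differ only in a region disjoint from where the move happens'' --- is not a valid inference for a general local move: for a crossing change, $a_2(K_+)-a_2(K_-)={\rm lk}(K_0)$ is a global quantity, so two knots agreeing near the move site can have different increments. What saves the argument is a special property of the Delta move (Okada \cite{Okada90}, refined in \cite{OT99}, \cite{Yamada00}): $a_2$ changes by exactly $\pm1$, with sign determined solely by the order and orientations of the three strands. Your proposed substitute lemma, that $\Delta a_2$ under a Delta move is ``a sum of pairwise linking numbers of small loops,'' is false for knots --- the increment is the constant $\pm1$ --- so the telescoping argument you build on it does not get off the ground. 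Finally, the case you flag as the main obstacle (strands distributed over two or three distinct handle edges) is actually the easy one: three strands meeting two handle edges leave room for at most one loop, so at least one loop is absent from the move and every $n(f|_{H_{ij}})$ is individually unchanged; the genuinely delicate case is one strand on each of $e_5$, $e_6$ and a single handle edge.
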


Here, a {\it Delta equivalence} is an equivalence relation on spatial graphs which is generated by {\it Delta moves} and ambient isotopies, where a Delta move is a local move on a spatial graph as illustrated in Fig. \ref{Delta} \cite{Matveev87}, \cite{MN89}. It is shown in \cite{MT97} that a Delta equivalence coincides with a {\it (spatial graph-)homology}, which is an equivalence relation on spatial graphs introduced in \cite{Taniyama94}. Note that a Delta move preserves the linking number of each of the constituent $2$-component links. We show the following lemma needed to prove Proposition \ref{homo_inv}.
\begin{figure}[htbp]
      \begin{center}
\scalebox{0.45}{\includegraphics*{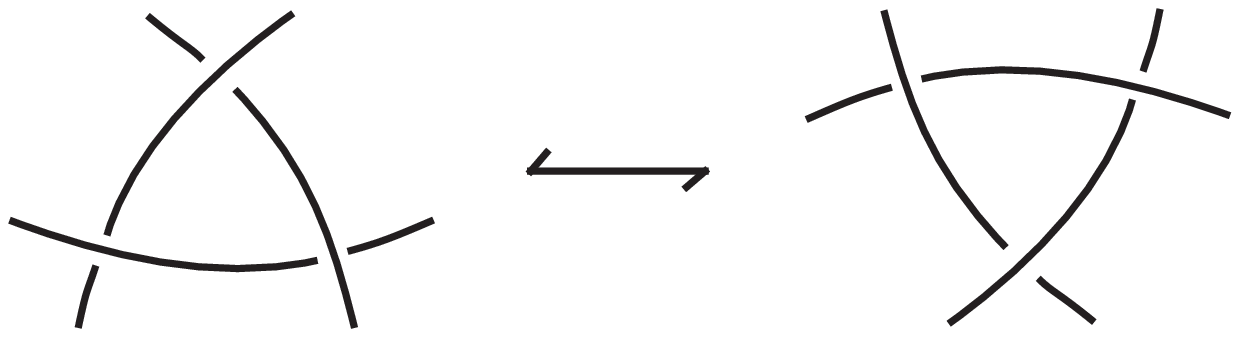}}
      \end{center}
   \caption{}
  \label{Delta}
\end{figure} 

\begin{Lemma}\label{handcuff_delta}
Let $g$ be a spatial handcuff graph with ${\rm lk}(L_{g})=0$ and $h$ a spatial handcuff graph obtained from $g$ by a single Delta move. Then we have the following. 
\begin{enumerate}
\item If either $g(\gamma_{1})$ or $g(\gamma_{2})$ does not appear in the Delta move as the strings, then $n(g)=n(h)$. 
\item If both $g(\gamma_{1})$ and $g(\gamma_{2})$ appear and $g(e)$ does not appear in the Delta move as the strings, then $n(g)-n(h)=\pm 1$. 
\item If all of $g(\gamma_{1})$, $g(\gamma_{2})$ and $g(e)$ appear in the Delta move as the strings, then $n(g)-n(h)=\pm 2$ or $0$. 
\end{enumerate}
\end{Lemma}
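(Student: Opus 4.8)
The plan is to analyze how the $D$-sum construction behaves under a single Delta move, using the skein-theoretic description of $n(g)$ already established in the proof of Lemma~\ref{nf_irr_ex}. Recall that $n(g) = a_2(K_D(g)) - a_2(g(\gamma_1)) - a_2(g(\gamma_2))$, so the effect of a Delta move on $n$ is controlled by its effect on the three quantities $a_2(K_D(\cdot))$, $a_2(\cdot(\gamma_1))$, $a_2(\cdot(\gamma_2))$. The key structural fact is that a Delta move involves three strings, and each string belongs to one of the three ``parts'' of the handcuff graph $g(\gamma_1)$, $g(\gamma_2)$, $g(e)$; since we may choose the disk $D$ so that $g(e) \subset D$, the move can be pushed onto $K_D(g)$ for a compatible choice of disks $D$ for $g$ and $h$. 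The strategy is to do a case analysis according to which parts of $g$ the three strings lie in, exactly as the statement is stratified into (1), (2), (3).

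First I would set up the basic tool: a Delta move is realized by a sequence of crossing changes together with an ambient isotopy (indeed a single Delta move changes $a_2$ of a knot by the change recorded via the Conway skein relation), so I would quantify the change in $a_2$ of any knot under a Delta move in terms of a linking-number-type quantity of the associated two-component ``smoothed'' links, just as in the computation behind (\ref{mn1}) and (\ref{mn2}). For case (1), where say $g(\gamma_2)$ does not appear: the Delta move then takes place within $g(\gamma_1) \cup g(e)$, hence within $K_D(g)$ as well, and it does not touch $g(\gamma_2)$ at all. One checks that the move changes $a_2(K_D(g))$ and $a_2(g(\gamma_1))$ by the same amount — because near the move $K_D(g)$ and $g(\gamma_1)$ differ only by the replacement of an arc $x_1$ of $\gamma_1$ by an arc of $\partial D$, which is disjoint from the move — so the difference $n$ is unchanged; this gives $n(g)=n(h)$.

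For case (2), both $g(\gamma_1)$ and $g(\gamma_2)$ appear but $g(e)$ does not. Then $a_2(g(\gamma_1))$ and $a_2(g(\gamma_2))$ are each unchanged (only one string of the move lies in each, and a Delta move with a string through it twice is needed to change $a_2$ — here each constituent knot sees only one of the three strands, so the move is a ``trivial'' isotopy on each of $g(\gamma_1)$ and $g(\gamma_2)$ individually). But the $D$-sum $K_D(g)$ contains pieces of $\gamma_1$, $\gamma_2$ and $\partial D$ and all three strands of the Delta move pass through it, so $a_2(K_D(g))$ changes; the change is by $\pm 1$ by the standard fact that a Delta move changes $a_2$ of a knot by $\pm 1$ (this is the Okada / Murakami--Nakanishi description of the Delta move as the ``unknotting operation'' for $a_2$ modulo nothing — more precisely $a_2$ changes by exactly $\pm1$ under one Delta move). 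Hence $n(g) - n(h) = \pm 1$.

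For case (3), all three of $g(\gamma_1)$, $g(\gamma_2)$, $g(e)$ appear, one strand in each. Then again $a_2(g(\gamma_1))$ and $a_2(g(\gamma_2))$ are unchanged (each meets only one strand). The $D$-sum $K_D(g)$ contains an arc of $\gamma_1$, an arc of $\gamma_2$, and — since we may take $g(e)\subset D\subset \partial D\cup(\text{interior})$ — we must be slightly careful: the strand of the move lying on $g(e)$ may or may not be recorded in $K_D(g)$ depending on the position of $D$; the honest statement is that one can choose $D$ either so that this strand is ``inside'' $D$ (not on $K_D$) or so that it is carried along $\partial D$. In the first choice the move becomes a Delta move on $K_D(g)$ using only two of its strands, which is an isotopy, giving $n$-change $0$ on that representative; but then the $D$-sums of $g$ and $h$ are computed with the \emph{same} disk up to isotopy only after accounting for how $e$ was dragged, and comparing the two natural choices of $D$ produces the discrepancy $\pm 2$ via the invariance of $\bar n$ under change of $D$ modulo $\mathrm{lk}=0$ together with an explicit local model like $f_{r,s}$ of Lemma~\ref{nf_irr_ex}. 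So $n(g)-n(h)\in\{0,\pm 2\}$.

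The main obstacle I anticipate is case (3): bookkeeping which strands of the Delta move actually lie on the $D$-sum $K_D$ requires choosing the disk $D$ carefully and tracking it through the move, and the ``$0$ or $\pm 2$'' dichotomy comes precisely from the two essentially different ways the loop-edge $e$ can interact with $D$. I expect the cleanest way to pin down the $\pm 2$ is to reduce to a local computation with the standard models $f_{r,s}$ (for which $n(f_{r,s})=2rs$ by Lemma~\ref{nf_irr_ex}), observing that changing the relative position of $e$ with respect to $D$ across the move corresponds to a $\pm1$ change in one of the twist parameters, hence a $\pm 2$ change in $n$ — while the case where $e$'s strand is irrelevant to $K_D$ on both sides gives $0$.
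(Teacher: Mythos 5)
Your case (2) is essentially the paper's argument: the same disk $D$ serves for both embeddings, so $K_D(h)$ differs from $K_D(g)$ by a single Delta move, Okada's theorem gives the $\pm1$ change in $a_2$, and the Brunnian property of the Delta move keeps the constituent knots fixed. But cases (1) and (3) both contain the same genuine gap: you never analyze what happens to the $D$-sum when a strand of the Delta move lies on $g(e)$. Such a strand is \emph{not} on $K_D(g)$ at all --- $K_D(g)$ is assembled from $L_g$ and $\partial D$, while $g(e)$ sits in the interior of $D$ --- so the move does not ``take place within $K_D(g)$ as well,'' as you assert in case (1). What actually happens is that a strand on $g(e)$ corresponds to \emph{two} antiparallel strands of $\partial D$ running alongside it, so the induced move on the $D$-sum is not a single Delta move. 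Consequently your stated reason in case (1) (``$K_D(g)$ and $g(\gamma_1)$ differ only by the replacement of an arc disjoint from the move'') is false whenever $g(e)$ carries a strand. The paper instead deforms the handcuff graph (Fig.~\ref{Delta2}) so that any such move is realized by \emph{self} Delta moves on $g(\gamma_1)$ alone, and then quotes the known invariance of $n$ under self Delta moves \cite[Theorem 2.1]{Nikkuni08}.

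In case (3) you do flag the difficulty, but your proposed fix cannot work as stated: since ${\rm lk}(L_g)=0$, the value $n$ is independent of the choice of $D$, so ``comparing the two natural choices of $D$'' can only ever produce discrepancy $0$, never the required $\pm2$; and reducing to the local model $f_{r,s}$ begs the question, because putting a general embedding into that normal form is exactly what Proposition~\ref{homo_inv} (which rests on this lemma) is needed for. The correct mechanism, and the paper's, is that one can choose disks $D$ and $D'$ for $g$ and $h$ so that $K_{D'}(h)$ is obtained from $K_D(g)$ by \emph{two} Delta moves (Fig.~\ref{Delta3}), one for each of the two parallel copies of the $e$-strand carried by $\partial D$; Okada then gives $a_2(K_D(g))-a_2(K_{D'}(h))=(\pm1)+(\pm1)\in\{0,\pm2\}$, while the constituent knots are unchanged by Brunnian-ness. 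Without identifying this doubling of the $e$-strand, neither (1) nor (3) is established.
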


\begin{proof}
(1) If all of the three strings in such an intended Delta move belong to the same spatial edge (such a move is calld a {\it self Delta move}), we have the result because it is known that $n(g)$ is invariant under a self Delta move \cite[Theorem 2.1]{Nikkuni08}. Next we consider the case that at least one of the three strings in the Delta move belong to $g(e)$. If $g(\gamma_{j})$ does not appear in the Delta move as the strings ($j=1$ or $2$), then by applying the deformation on $g(H)$ as illustrated in Fig. \ref{Delta2} repeatedly, we can see that such a Delta move may be realized by self Delta moves on $g(\gamma_{i})$ ($i\neq j$). Therefore we have that $n(g)=n(h)$. 

\noindent
(2) If $h$ is obtained from $g$ by such an intended single Delta move, then we may consider $K_{D}(g)$ and $K_{D}(h)$ by the same $2$-disk $D$ so that $K_{D}(h)$ is obtained from $K_{D}(g)$ by a single Delta move. Then by the result of Okada \cite{Okada90} that if a knot $K_{1}$ is obtained from a knot $K_{2}$ by a single delta move then $a_{2}(K_{1})-a_{2}(K_{2})=\pm 1$, we have that $a_{2}(K_{D}(g))-a_{2}(K_{D}(h))=\pm 1$. On the other hand, since a Delta move is a {\it $3$-component Brunnian local move} \cite[\S 2, Examples (2)]{TY01}, we have that $g(\gamma_{i})$ and $h(\gamma_{i})$ are ambient isotopic ($i=1,2$). Hence we have that $n(g)-n(h)=a_{2}(K_{D}(g))-a_{2}(K_{D}(h))=\pm 1$. 

\noindent
(3) If $h$ is obtained from $g$ by such an intended single Delta move, then we may consider $K_{D}(g)$ and $K_{D'}(h)$ so that $K_{D'}(h)$ is obtained from $K_{D}(g)$ by twice Delta moves as illustrated in Fig. \ref{Delta3}. Thus by Okada's result as we said in the proof of (2), we have that $a_{2}(K_{D}(g))-a_{2}(K_{D'}(h))=\pm 2$ or $0$. Note that $g(\gamma_{i})$ and $h(\gamma_{i})$ are ambient isotopic ($i=1,2$) by Brunnian property of the Delta move. Hence we have that $n(g)-n(h)=a_{2}(K_{D}(g))-a_{2}(K_{D'}(h))=\pm 2$ or $0$. 
\end{proof}
\begin{figure}[htbp]
      \begin{center}
\scalebox{0.45}{\includegraphics*{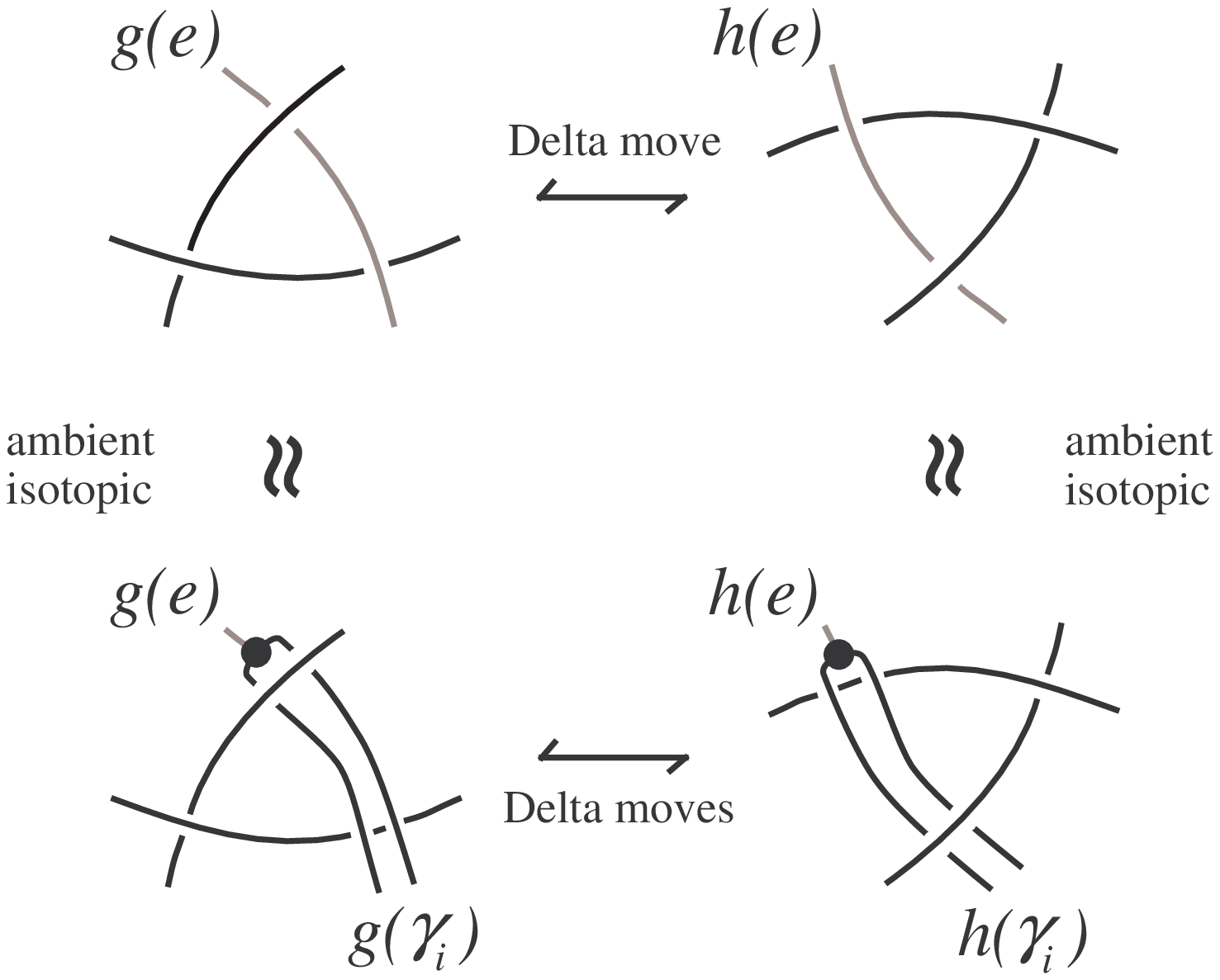}}
      \end{center}
   \caption{}
  \label{Delta2}
\end{figure} 
\begin{figure}[htbp]
      \begin{center}
\scalebox{0.45}{\includegraphics*{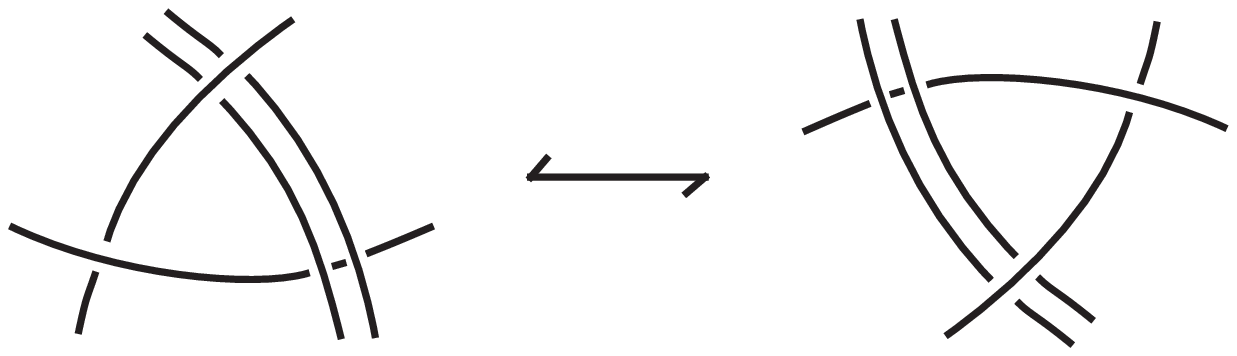}}
      \end{center}
   \caption{}
  \label{Delta3}
\end{figure} 

\begin{proof}[Proof of Proposition \ref{homo_inv}.] 
Let $f'$ be a spatial handcuff graph which is obtained from $f$ by a single Delta move. It is sufficient to show that $\xi(f)=\xi(f')$. If either $f(e_{5})$ or $f(e_{6})$ does not appear in the Delta move as the strings, then by Lemma \ref{handcuff_delta} (1) and Brunnian property of the Delta move, we have that $n(f|_{H_{ij}})=n(f'|_{H_{ij}})$ for any $i=1,2$ and $j=3,4$. Hence $\xi(f)=\xi(f')$. If none of $f(e_{1})$, $f(e_{2})$, $f(e_{3})$ or $f(e_{4})$ appears in the Delta move as the strings, then by Lemma \ref{handcuff_delta} (2) we have that $n(f|_{H_{ij}})-n(f'|_{H_{ij}})=\pm 1\ (i=1,2,\ j=3,4)$. Note that if two oriented knots differ by a single Delta move then the variation of $a_{2}$ of them is determined by only the order of strings in the Delta move and their orientations arised from following the knot along the orientation (cf. \cite{OT99}, \cite[Theorem 6]{Yamada00}). Hence we have that $n(f|_{H_{ij}})-n(f'|_{H_{ij}})=1\ (i=1,2,\ j=3,4)$ or $n(f|_{H_{ij}})-n(f'|_{H_{ij}})=-1\ (i=1,2,\ j=3,4)$. Then we have that
\begin{eqnarray*}
\xi(f)-\xi(f')
= \sum_{i,j}(-1)^{i+j}\left\{n(f|_{H_{ij}})-n(f'|_{H_{ij}})\right\}
= (\pm 1)\sum_{i,j}(-1)^{i+j}
= 0. 
\end{eqnarray*}
Finally we consider the case that all of $f(e_{5})$, $f(e_{6})$ and $f(e_{k})$ appear in the Delta move as the strings ($k=1,2,3,4$). It is sufficient to check the case of $k=1$. Then by Lemma \ref{handcuff_delta} (3) and the note of the variation of $a_{2}$ of two oriented knots differ by a single Delta move as we said before, it holds that any one of $n(f|_{H_{1j}})-n(f'|_{H_{1j}})=2\ (j=3,4)$, $n(f|_{H_{1j}})-n(f'|_{H_{1j}})=-2\ (j=3,4)$ or $n(f|_{H_{1j}})-n(f'|_{H_{1j}})=0\ (j=3,4)$. Note that $f|_{H_{2j}}$ and $f'|_{H_{2j}}$ are ambient isotopic for any $j=3,4$ by Brunnian property of the Delta move. Then we have that
\begin{eqnarray*}
\xi(f)-\xi(f')
= n(f|_{H_{13}})-n(f'|_{H_{13}})
-\left\{n(f|_{H_{14}})-n(f'|_{H_{14}})\right\}
= 0.
\end{eqnarray*}
Therefore we have the desired conclusion. 
\end{proof}

\begin{Lemma}\label{keylemma}
Let $f$ be a spatial embedding of $P_{4}$ with ${\rm lk}(f(c_{1} \cup c_{4}))=0$. Then we have that ${\rm lk}(f(c_{1} \cup c_{3})){\rm lk}(f(c_{2} \cup c_{4}))\neq 0$ if and only if $\xi(f)\neq 0$.
\end{Lemma}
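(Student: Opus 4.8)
The plan is to establish the exact formula
\begin{eqnarray*}
\xi(f)=2\,{\rm lk}(f(c_{1}\cup c_{3}))\,{\rm lk}(f(c_{2}\cup c_{4})),
\end{eqnarray*}
from which the asserted equivalence is immediate. Write $\lambda={\rm lk}(f(c_{1}\cup c_{3}))$ and $\mu={\rm lk}(f(c_{2}\cup c_{4}))$. The idea is to use Proposition~\ref{homo_inv}: since $\xi$ is a Delta equivalence invariant and a Delta move preserves the linking number of each constituent $2$-component link, $\xi(f)$ depends only on the Delta equivalence class of $f$. The only cycles of $P_{4}$ are $c_{1},c_{2},c_{3},c_{4}$, and the only pairs among them consisting of disjoint cycles are $(c_{1},c_{3})$, $(c_{1},c_{4})$ and $(c_{2},c_{4})$; by the homology (= Delta equivalence) classification of spatial embeddings of $P_{4}$, the class of $f$ is determined by the three linking numbers ${\rm lk}(f(c_{1}\cup c_{3}))$, ${\rm lk}(f(c_{1}\cup c_{4}))$, ${\rm lk}(f(c_{2}\cup c_{4}))$, hence, under the hypothesis ${\rm lk}(f(c_{1}\cup c_{4}))=0$, by the pair $(\lambda,\mu)$ alone.

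It then suffices to compute $\xi$ on a single convenient representative, and I would take, for each $(\lambda,\mu)$, the spatial embedding $g_{\lambda,\mu}$ of $P_{4}$ in which $g_{\lambda,\mu}(c_{1})$ and $g_{\lambda,\mu}(c_{4})$ are unknotted and split, $g_{\lambda,\mu}(e_{2})$ and $g_{\lambda,\mu}(e_{4})$ are unknotted and unlinked arcs, $g_{\lambda,\mu}(e_{1})$ makes $\mu$ full twists with a strand of $g_{\lambda,\mu}(c_{4})$ and is otherwise unknotted and unlinked, and $g_{\lambda,\mu}(e_{3})$ makes $\lambda$ full twists with a strand of $g_{\lambda,\mu}(c_{1})$ and is otherwise unknotted and unlinked, the handedness of the twist regions being chosen so that ${\rm lk}(g_{\lambda,\mu}(c_{1}\cup c_{3}))=\lambda$, ${\rm lk}(g_{\lambda,\mu}(c_{1}\cup c_{4}))=0$ and ${\rm lk}(g_{\lambda,\mu}(c_{2}\cup c_{4}))=\mu$. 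By the previous paragraph $\xi(f)=\xi(g_{\lambda,\mu})$. Reading off the two twist regions of each handcuff restriction, $g_{\lambda,\mu}|_{H_{ij}}$ is ambient isotopic to $f_{\lambda,\mu}$, $f_{0,\mu}$, $f_{\lambda,0}$, $f_{0,0}$ for $(i,j)=(1,3),(1,4),(2,3),(2,4)$ respectively, in the notation of Lemma~\ref{nf_irr_ex}. Therefore, by Lemma~\ref{nf_irr_ex} and $n(f_{r,s})=2rs$,
\begin{eqnarray*}
\xi(f)=\xi(g_{\lambda,\mu})=n(f_{\lambda,\mu})-n(f_{0,\mu})-n(f_{\lambda,0})+n(f_{0,0})=2\lambda\mu-0-0+0=2\lambda\mu ,
\end{eqnarray*}
so $\xi(f)\neq0$ if and only if $\lambda\mu\neq0$, as required.

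The main obstacle is the reduction in the first paragraph, i.e.\ the fact that every spatial embedding of $P_{4}$ with ${\rm lk}(f(c_{1}\cup c_{4}))=0$ is Delta equivalent to the model $g_{\lambda,\mu}$; this is the homology classification of spatial embeddings of $P_{4}$, which I would either quote from the literature (in the spirit of Taniyama's homology classification of spatial graphs) or prove directly — first unknotting every constituent cycle by self Delta moves, then clearing away all excess linking and winding by further Delta moves while preserving the three linking numbers, much as in the Murakami--Nakanishi theorem for links. A secondary point needing care is the orientation bookkeeping in the construction of $g_{\lambda,\mu}$, so that its three linking numbers really come out to be $\lambda$, $0$, $\mu$; this is what pins the constant to $2$ rather than $\pm2$, although only the non-vanishing of $\xi(f)$ is used in the lemma. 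The remaining steps — checking that each $g_{\lambda,\mu}|_{H_{ij}}$ is the stated standard handcuff, and that the alternating sum collapses using $n(f_{0,s})=n(f_{r,0})=0$ — are routine.
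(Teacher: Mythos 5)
Your proposal is correct and follows essentially the same route as the paper: invoke the Delta-equivalence (homology) classification of spatial embeddings of a planar graph by the linking numbers of its constituent $2$-component links (the paper quotes Soma--Sugai--Yasuhara for this), reduce via Proposition~\ref{homo_inv} to a standard model realizing $(\lambda,0,\mu)$, and evaluate $\xi$ there using Lemma~\ref{nf_irr_ex} to get $2\lambda\mu$. The only difference is cosmetic: the paper places the twists so that three of the four handcuff restrictions are trivial and the whole contribution comes from $H_{24}$, whereas your model concentrates it in $H_{13}$.
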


\begin{proof}
It is known that two spatial embeddings of a planar graph are Delta equivalent if and only if their corresponding constituent $2$-component links have the same linking number \cite{SSY97}. Therefore we have that if ${\rm lk}(f(c_{1} \cup c_{3}))=s$ and ${\rm lk}(f(c_{2} \cup c_{4}))=r$, then $f$ is Delta equivalent to the spatial embedding $h_{r,s}$ of $P_{4}$ as illustrated in Fig. \ref{handcuff_mn4}, where the rectangles represented by $r$ and $s$ stand for $|r|$ full twists and $|s|$ full twists as illustrated in Fig. \ref{handcuff_mn2}, respectively. Note that $h_{r,s}|_{H_{13}}$, $h_{r,s}|_{H_{14}}$ and $h_{r,s}|_{H_{23}}$ are trivial spatial handcuff graphs. Then by combining Proposition \ref{homo_inv} and Lemma \ref{nf_irr_ex}, we have that 
\begin{eqnarray*}
\xi(f)=\xi(h_{r,s})= n(h_{r,s}|_{H_{24}})=2rs. 
\end{eqnarray*}
This implies the result. 
\end{proof}
\begin{figure}[htbp]
      \begin{center}
\scalebox{0.4}{\includegraphics*{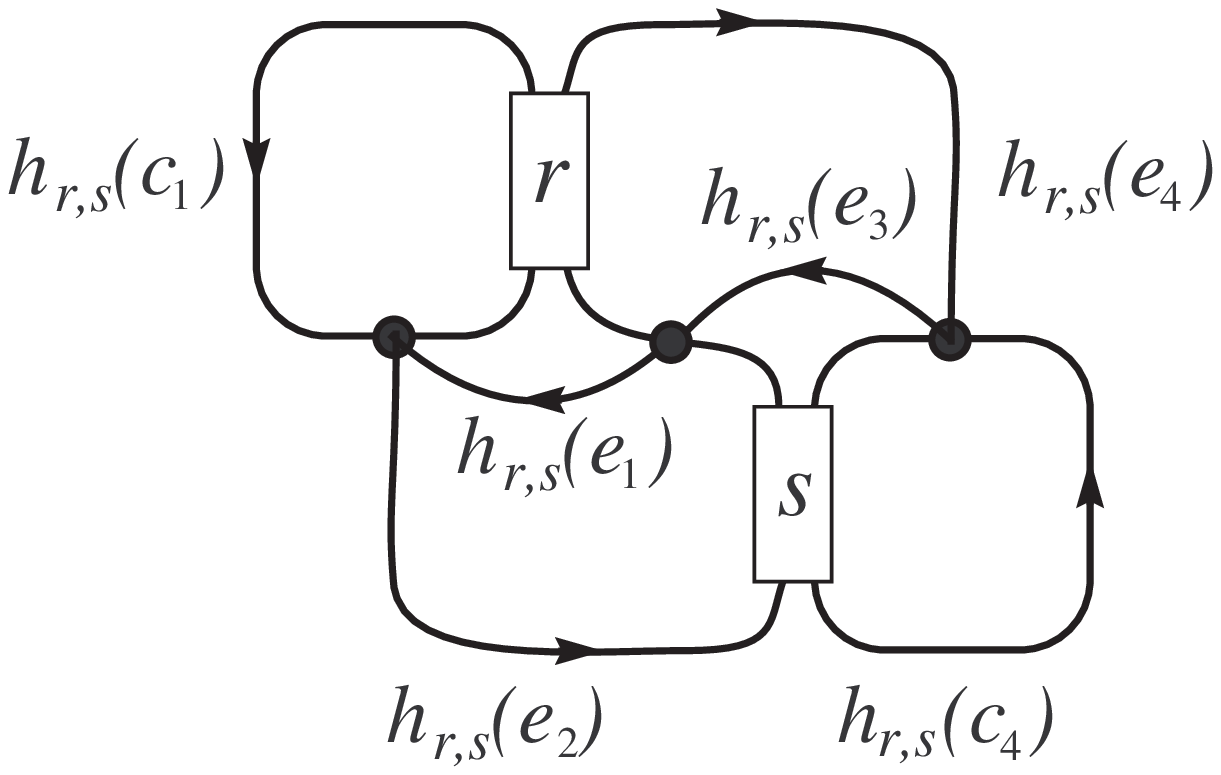}}
      \end{center}
   \caption{}
  \label{handcuff_mn4}
\end{figure} 

Note that if $\xi(f)\neq 0$ then there must exist a subgraph $H_{ij}$ such that $n(f|_{H_{ij}})\neq 0$. Then by Lemma \ref{nf_irr} we have that $f|_{H_{ij}}$ is irreducible. 

\begin{proof}[Proof of Theorem \ref{main}.] 
Let $f$ be a spatial embedding of $P {*}_{3} P'$. Note that every spatial embedding of a graph in the Petersen family contains a $2$-component link with odd linking number \cite{CG83}, \cite{TY01}. Hence there exists a pair of two disjoint cycles $\gamma_{1},\gamma_{2}$ of $P$ such that ${\rm lk}(f(\gamma_{1}\cup \gamma_{2}))\neq 0$, and there exists a pair of two disjoint cycles $\gamma'_{1},\gamma'_{2}$ of $P'$ such that ${\rm lk}(f(\gamma'_{1}\cup \gamma'_{2}))\neq 0$. If there exist two edges $e_{i}$ and $e_{j}$ ($1\le i<j \le 3$) each of which connects $\gamma_{l}$ with $\gamma'_{k}$ for some $l$ and $k$, by Bowlin-Foisy's argument \cite[Lemma 3]{BF04}, we have that $f(\gamma_{1}\cup \gamma_{2}\cup \gamma'_{1}\cup \gamma'_{2}\cup e_{i}\cup e_{j})$ contains a non-splittable $3$-component link. If for any $l$ and $k$ there do not exist two edges $e_{i}$ and $e_{j}$ ($1\le i<j \le 3$) each of which connects $\gamma_{l}$ with $\gamma'_{k}$, then we may assume that $e_{1}$ connects $\gamma_{1}$ with $\gamma'_{1}$, $e_{2}$ connects $\gamma_{1}$ with $\gamma'_{2}$ and $e_{3}$ connects $\gamma_{2}$ with $\gamma'_{2}$ without loss of generality. If $f(\gamma_{2}\cup \gamma'_{1})$ is non-splittable, then $f(\gamma_{1}\cup \gamma_{2}\cup \gamma'_{1})$ is a non-splittable $3$-component link. If $f(\gamma_{2}\cup \gamma'_{1})$ is split, then let us consider the subgraph $F=\gamma_{1}\cup \gamma_{2}\cup \gamma'_{1}\cup \gamma'_{2}\cup e_{1}\cup e_{2}\cup e_{3}$ of $P {*}_{3} P'$. We denote the graph obtained from $F$ by contracting $e_{1},\ e_{2}$ and $e_{3}$ by $F/e_{1}/e_{2}/e_{3}$. Note that $F/e_{1}/e_{2}/e_{3}$ is homeomorphic to $P_{4}$ which is illustrated in Fig. \ref{DP4}. Let $\bar{f}$ is a spatial embedding of $F/e_{1}/e_{2}/e_{3}$ naturally induced from $f|_{F}$. Then by applying Lemmas \ref{keylemma} and \ref{nf_irr} to $\bar{f}$, we have that $\bar{f}(F/e_{1}/e_{2}/e_{3})$ contains an irreducible spatial handcuff graph whose constituent $2$-component link is split. Then it is clear that $f(F)$ also contains a spatial handcuff graph with the same spatial graph type as above. This implies that $f$ contains an irreducible spatial handcuff graph whose constituent $2$-component link is split. This completes the proof. 
\end{proof}

\begin{Example}\label{ex}
{\rm 
Let $f$ be the spatial embedding of $K_{6} {*}_{3} K_{6}$ as illustrated in Fig. \ref{example}. Then we can see that each of the spatial handcuff graphs contained in $f$ is trivial or ambient isotopic to the spatial handcuff graph as illustrated in Fig. \ref{Hopf_handcuff}. Namely $f$ does not contain an irreducible spatial handcuff graph whose constituent $2$-component link is split. But $f$ contains exactly one non-splittable $3$-component link. Next, let $g$ be the spatial embedding of $K_{6} {*}_{3} K_{6}$ as illustrated in Fig. \ref{example} which is obtained from $f$ by a single crossing change. Then we can see that each of the constituent knots of $g$ is trivial and each of the constituent $n$-component links of $g$ is split for $n\ge 3$. But $g$ contains exactly one irreducible spatial handcuff graph whose constituent $2$-component link is split. 
}
\end{Example}
\begin{figure}[htbp]
      \begin{center}
\scalebox{0.375}{\includegraphics*{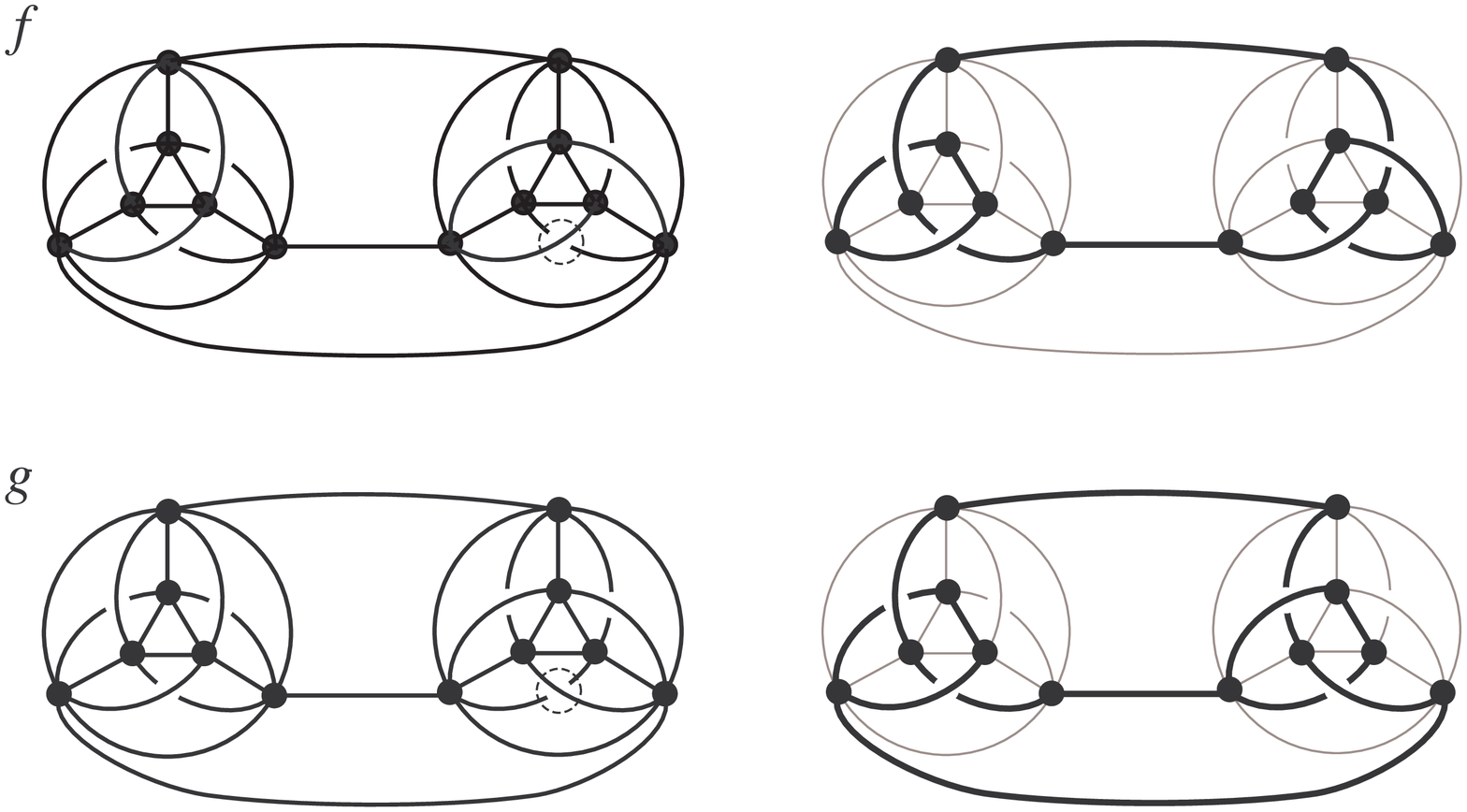}}
      \end{center}
   \caption{}
  \label{example}
\end{figure}
\begin{figure}[htbp]
      \begin{center}
\scalebox{0.4}{\includegraphics*{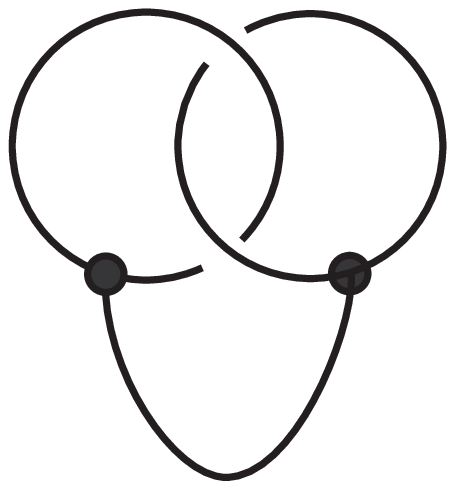}}
      \end{center}
   \caption{}
  \label{Hopf_handcuff}
\end{figure}

\section*{Acknowledgment}

The author is grateful to the referee for his or her comments. 

{\normalsize
}

\end{document}